\title{Isotopy of Morin singularities}
 \newtheorem{theorem}{Theorem}[section]
 \newtheorem{lemma}[theorem]{Lemma}
 \newtheorem{corollary}[theorem]{Corollary}
\theoremstyle{definition}
 \newtheorem{definition}[theorem]{Definition}
\numberwithin{equation}{section}
\numberwithin{figure}{section}
\numberwithin{table}{section}
\newcommand{\R}{\boldsymbol{R}}
\newcommand{\C}{\boldsymbol{C}}
\newcommand{\Z}{\boldsymbol{Z}}
\newcommand{\N}{\boldsymbol{N}}
\newcommand{\trace}{\operatorname{trace}}
\newcommand{\rank}{\operatorname{rank}}
\newcommand{\hess}{\operatorname{Hess}}
\newcommand{\grad}{\operatorname{grad}}
\renewcommand{\phi}{\varphi}
\newcommand{\sign}{\operatorname{sgn}}
\newcommand{\spann}[1]{\left\langle{#1}\right\rangle}
\newcommand{\A}{\mathcal{A}}
\newcommand{\E}{\mathcal{E}}
\newcommand{\ep}{\varepsilon}
\newcommand{\zv}{0}
\newcommand{\pmt}[1]{{\begin{pmatrix} #1  \end{pmatrix}}}
\newcommand{\trans}[1]{{\vphantom{#1}}^t{\!#1}}
\newcommand{\diff}[2]{\operatorname{Diff}^{#2}(#1)}
\newcommand{\mycomment}[1]{}
\renewcommand{\section}{%
  \@startsection{section}% #1 name
   {1}% #2 level
   {\z@}% #3 indent length
   {-3.5ex \@plus -1ex \@minus -.2ex}% #4 up space
   {2.3ex \@plus.2ex}% #5 under space
   {\normalfont\normalsize\bfseries}% #6
}%
\renewcommand{\subsection}{%
  \@startsection{subsection}% #1 name
   {1}% #2 level
   {\z@}% #3 indent length
   {-3.5ex \@plus -1ex \@minus -.2ex}% #4 up space
   {2.3ex \@plus.2ex}% #5 under space
   {\normalfont\normalsize\bfseries}% #6
}%
\author{Kentaro Saji}
\date{\today}
\begin{document}
\maketitle
 \renewcommand{\thefootnote}{\fnsymbol{footnote}}
 \footnote[0]{2010 Mathematics Subject classification.
 57R45; 58K60, 58K65}
 \footnote[0]{Keywords and Phrase.
 Morin singularities, deformation}
 \footnote[0]{Dedicated to Professor Stanis{\l}aw Janeczko on 
  the occasion of his sixtieth birthday}
\begin{abstract}
We define an equivalence relation called
$\A$-isotopy between fini\-te\-ly
determined map-germs, which is a 
strengthened version of $\A$-equi\-va\-lence.
We consider the number of $\A$-isotopy classes
of equidimensional Morin singularities,
and some other well-known low-dimensional
singularities. We also give an application to stable
perturbations of simple equi-dimensional map-germs.
\end{abstract}
\section{Introduction}
There are various groups which act on the set $C^\infty(m,n)$
of map-germs $(\R^m,0)\to(\R^n,0)$.
The group $\A$ will denote the group of changes of coordinates in the source 
and target, which act on $C^\infty(m,n)$ by $\tau\circ f\circ \sigma$,
where $f\in C^\infty(m,n)$ and $\sigma$ and $\tau$ are,
respectively, diffeomorphism-germs in the source and target.
Two map-germs $f,g\in C^\infty(m,n)$ are {\em $\A$-equivalent}\/
if they belong to the same orbit.
In this paper, we define an equivalence relation 
called {\em $\A$-isotopy}, which is a 
strengthened version of $\A$-equivalence.
Let $r$ be a natural number.
A map-germ $f\in C^\infty(m,n)$ is said to be
{\em $r$-determined}\/ if 
any $g\in C^\infty(m,n)$ satisfying
$j^rf(0)=j^rg(0)$ is
$\A$-equivalent to $f$,
where $j^rf(0)$ is the $r$-jet of $f$ at $0$.
%A $r$-jet $j^rf(0)$ of $f$ is {\em $r$-sufficient}\/
%if $f$ is $r$-determined.
\begin{definition}
Let $f,g\in C^\infty(m,n)$ be $\A$-equivalent
map-germs that are $r$-determined.
Then $f$ and $g$ are {\em $\A$-isotopic}\/
if there exist continuous curves
$\sigma:I\to\diff{m}{r}\subset J^r(m,m)$ and
$\tau:I\to\diff{n}{r}\subset J^r(n,n)$ such that
$\sigma(0)$, $\tau(0)$ are both the identity, and
$$j^r(g)(0)=j^r\big(\tau(1)\circ f\circ \sigma(1)\big)(0)$$
holds, where $I=[0,1]$ and
$\diff{m}{r}$ denotes the set of the $r$-jets of
diffeo\-mor\-phism-germs
$(\R^m,0)\to(\R^m,0)$.
\end{definition}
Namely, $f$ and $g$ are $\A$-isotopic
if and only if
$j^rf(0)$ and $j^r g(0)$ are located on
the same arc-wise connected component of
the $r$-jet of the $\A$-orbit of $j^rf(0)$.
Since the set $\diff{m}{r,+}$ of $r$-jets
of orientation-preserving diffeomorphism-germs
is arc-wise connected,
%and the identity is an element of $\diff{m}{r,+}$,
$f$ and $g$ are $\A$-isotopic
if and only if
there exist
orientation-preserving diffeomorphism-germs
$\sigma^+:(\R^m,0)\to(\R^m,0)$ and
$\tau^+:(\R^n,0)\to(\R^n,0)$
such that
$j^rg(0)=j^r(\tau^+\circ f\circ \sigma^+)(0)$ holds.

In this paper, we study the number of $\A$-isotopy 
classes of equidimensional Morin singularities.
Morin singularities are stable, and conversely, corank one and stable
germs are Morin singularities. 
This means that Morin singularities are fundamental and
frequently appear as singularities of
maps from a manifold to another.
We show that
for an $n$-Morin singularity $f:(\R^n,0)\to(\R^n,0)$,
there are four (respectively, two)
$\A$-isotopy 
classes in $\A(f)$
if $n=4i$ (respectively, $n\ne 4i$) with $i\in\N$ 
(see section \ref{sec:isotopy}),
where $\A(f)$ stands for the $\A$-orbit of $f$.
For a $k$-Morin singularity $f:(\R^n,0)\to(\R^n,0)$ ($k<n$),
there are two 
$\A$-isotopy 
classes (respectively, is one $\A$-isotopy class) in $\A(f)$
if $k=2i$ (respectively, $k\ne 2i$) (see section \ref{sec:isotopy}).
The tables in section \ref{sec:normalform}
summerize the invariants and normal forms for the $\A$-isotopy classes
of these Morin singularities.
In section \ref{sec:other}, we consider the same problem for
some other well-known low-dimensional singularities.
As an application, we consider in section \ref{sec:perturb} $\A$-isotopy 
classes of $n$-Morin singularities appearing on stable perturbations
of simple map-germs $(\R^n,0)\to(\R^n,0)$.
We remark that homotopy types of the $\mathcal{A}^2$-orbit of
the fold are
considered by Ando \cite{ando2,ando3},
thus we are mainly interested in the case $k\geq2$.
We also remark that this type of problem is asked
by Nishimura \cite{nfbook}.

\section{$\A$-isotopy of Morin singularities}\label{sec:isotopy}
Let $f\in C^\infty(n,n)$ be an equidimensional map-germ.
Then $f$ is a {\em Morin singularity of\/ $\Sigma^{(1^k,0)}$-type}\/
(or shortly, a {\em $k$-Morin singularity}\/)
if $f$ is $\A$-equivalent to
the germ
$$
(x_1,\ldots,%x_k,x_{k+1},\ldots,
x_n)
\mapsto
(x_1x_2+x_1^2x_3+\cdots+x_1^{k-1}x_{k}+x_1^{k+1},\ 
x_2,\ \ldots,\ %x_k,x_{k+1},\ \ldots,
x_n)
$$
at the origin, where $k\leq n$.
For the meaning of the notation, and further details,
see \cite{morin}.
It is well known that a $k$-Morin singularity
is $(k+1)$-determined.
There are recognition criteria for $k$-Morin singularities \cite{SUY3}.
Let $f\in C^\infty(n,n)$ and 
$\lambda$ be the determinant of the Jacobi matrix of $f$.
Let $0$ be a singular point of $f$, namely $\lambda(0)=0$,
then the singular point $0$ is {\em non-degenerate}\/ 
if $d\lambda(0)\ne0$.
Let $0$ be a non-degenerate singular point of $f$,
then there exists a never-vanishing vector field $\eta$
around $0$ on $\R^n$
such that
$\eta(p)\in\ker df(p)$
for $p\in S(f)$,
where $S(f)$ is the set of singular points of $f$.
We call $\eta$ the {\em null-vector field}.
Then the following theorem holds.
\begin{theorem}{\rm \cite[Theorem A1, page 746]{SUY3}}
Let\/ $f$ and\/ $\lambda$ be as above.
Then\/ $f$ at\/ $0$ is a\/ $k$-Morin singularity
if and only if
\begin{equation}\label{eq:morinhanti}
\begin{array}{l}
\lambda(0)=\eta\lambda(0)=\cdots=\eta^{k-1}\lambda(0)=0,\ 
\eta^{k}\lambda(0)\ne0\\
\hspace{40mm}{\rm and}\quad
\rank
d(\lambda,\eta\lambda,\ldots,\eta^{k-1}\lambda)(0)
=k
\end{array}
\end{equation}
hold, where\/ $d(\lambda,\eta\lambda,\ldots,\eta^{k-1}\lambda)$ 
denotes the differential of the map\/ 
$$(\lambda,\eta\lambda,\ldots,\eta^{k-1}\lambda):(\R^n,0)\to(\R^k,0),$$
and\/ $\eta\lambda$ denotes the directional derivative of\/ $\lambda$
with respect to\/ $\eta$,
and\/ $\eta^k\lambda=
\underbrace{\eta\cdots\eta}_{k\text{-times}}\lambda$.
\end{theorem}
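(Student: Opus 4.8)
The plan is to reduce $f$ to a corank-one prenormal form and then to read conditions \eqref{eq:morinhanti} as the defining data of a versal unfolding of the $A_k$ function-germ $x_1^{k+1}$, for which a uniqueness theorem is available. First I would observe that the non-degeneracy hypothesis $d\lambda(0)\ne0$ forces $f$ to have corank exactly one at $0$: a point of corank $\ge2$ makes the Jacobian determinant vanish to order $\ge2$, hence would give $d\lambda(0)=0$. Since $f$ has corank one, it is $\A$-equivalent to a germ of the form
$$
(x_1,\dots,x_n)\mapsto\bigl(f_1(x_1,\dots,x_n),\,x_2,\dots,x_n\bigr).
$$
In this form the Jacobian determinant is, up to a nonvanishing factor, $\lambda=\partial f_1/\partial x_1$, the kernel of $df$ along $S(f)=\{\partial f_1/\partial x_1=0\}$ is spanned by $\partial/\partial x_1$, and we may therefore take $\eta=\partial/\partial x_1$. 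Consequently $\eta^m\lambda=\partial^{m+1}f_1/\partial x_1^{m+1}$, and \eqref{eq:morinhanti} translates into statements about the one-variable function $x_1\mapsto f_1(x_1,0,\dots,0)$ and its unfolding by the parameters $x_2,\dots,x_n$.

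Before computing I would check that the list \eqref{eq:morinhanti} is $\A$-invariant and independent of the chosen extension of $\eta$; this is what lets me verify necessity on the normal form alone. The algebraic heart of the invariance is the identity $\eta^m(h\lambda)(0)=h(0)\,\eta^m\lambda(0)$, valid whenever $\lambda,\eta\lambda,\dots,\eta^{m-1}\lambda$ all vanish at $0$, because every Leibniz correction carries a factor of some lower-order $\eta^j\lambda$. The same mechanism handles the rescalings of $\lambda$ coming from target coordinate changes and the freedom in extending $\eta$ off $S(f)$, and shows that both the vanishing/non-vanishing pattern and the rank in \eqref{eq:morinhanti} are well defined. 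Granting this, necessity becomes a direct computation on the normal form: one finds $\lambda=\partial f_1/\partial x_1=\sum_{j=2}^{k}(j-1)x_1^{j-2}x_j+(k+1)x_1^{k}$, whence $\eta^m\lambda=(m+1)!\,x_{m+2}+(\text{terms divisible by }x_1)$ for $0\le m\le k-1$ and $\eta^k\lambda(0)=(k+1)!\ne0$. This yields the required chain of vanishing, and since each $d(\eta^m\lambda)(0)$ contains the summand $(m+1)!\,dx_{m+2}$ with the indices $x_2,\dots,x_{k+1}$ pairwise distinct, the $k$ differentials are independent and the rank equals $k$.

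For sufficiency I would run this dictionary in reverse. In the prenormal form the conditions $\partial^j f_1/\partial x_1^j(0)=0$ for $1\le j\le k$ together with $\partial^{k+1}f_1/\partial x_1^{k+1}(0)\ne0$ say exactly that $f_1(\,\cdot\,,0,\dots,0)$ has an $A_k$ critical point at the origin, while $\rank\,d(\lambda,\eta\lambda,\dots,\eta^{k-1}\lambda)(0)=k$ says that the family $(x_2,\dots,x_n)\mapsto f_1(\,\cdot\,,x_2,\dots,x_n)$ is a versal (transversal) unfolding of that critical point. By the uniqueness of versal unfoldings of $A_k$ — equivalently by Morin's classification of corank-one stable germs — suitable coordinate changes in source and target bring $f_1$ to $x_1x_2+x_1^2x_3+\cdots+x_1^{k-1}x_k+x_1^{k+1}$, so that $f$ is $\A$-equivalent to the stated normal form; the recalled $(k+1)$-determinacy then ensures that matching the relevant jet is enough.

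The main obstacle is this sufficiency step, and inside it the careful translation of the coordinate-free rank condition of \eqref{eq:morinhanti} into the transversality hypothesis of the unfolding theorem: one must verify that $\lambda,\eta\lambda,\dots,\eta^{k-1}\lambda$ are precisely the unfolding coefficients whose independent variation is required, and that their differentials having rank $k$ is equivalent to their forming part of a coordinate system transverse to the $A_k$ orbit. Establishing this dictionary, in tandem with the invariance claims above, is where the real work of the proof lies; the computation on the normal form and the reduction to prenormal form are routine by comparison.
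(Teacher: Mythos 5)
Your proposal addresses a statement that the paper does not actually prove: it is imported verbatim from \cite[Theorem A1]{SUY3}, so there is no in-paper proof to compare against, and the closest internal analogue is the proof of Lemma \ref{lem:prenormal}, which shares your first reduction. Granting that, your route is sound but genuinely different in character from the style of the paper and of \cite{SUY3}. After the (correct) observations that $d\lambda(0)\ne0$ forces corank one and that one may pass to the prenormal form $(f_1(x),x_2,\ldots,x_n)$ with $\eta=\partial/\partial x_1$, $\lambda=(f_1)_{x_1}$, you convert the rank condition into versality of the unfolding $F(x_1;x_2,\ldots,x_n)=f_1(x_1,x_2,\ldots,x_n)$ of the $A_k$-genotype and invoke uniqueness of versal unfoldings, equivalently Mather's classification of stable germs by the local algebra $Q(f)\cong\E_1/\spann{x_1^{k+1}}$. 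That dictionary does work: writing $\dot F_j=\partial f_1/\partial x_j|_{x_2=\cdots=x_n=0}$ and $g(x_1)=f_1(x_1,0,\ldots,0)$, the covector $d(\eta^m\lambda)(0)$ has $x_1$-component $\partial_{x_1}^{m+2}f_1(0)$, which vanishes for $m\le k-2$ and is nonzero for $m=k-1$, so the rank condition is equivalent to the classes of $\dot F_2,\ldots,\dot F_n$ spanning $\spann{x_1,\ldots,x_1^{k-1}}$ modulo $\spann{x_1^{k}}$, i.e.\ to $\E_1=\spann{g,g'}_{\E_1}+\R\{1,\dot F_2,\ldots,\dot F_n\}$, which is the stability criterion for $(F,x_2,\ldots,x_n)$; note that the constant $1$ (absorbed by translating $X_1$) must be included in the versality notion, otherwise even the fold fails it. What your approach buys is brevity; what it costs is reliance on the Mather--Martinet machinery, whereas the paper and \cite{SUY3} work by explicit coordinate changes via the Malgrange preparation theorem, which is self-contained and, crucially for this paper, can be carried out with orientation-preserving diffeomorphisms --- the refinement that $\A$-isotopy requires and that versality theorems do not provide.

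Two local repairs are needed. First, in your necessity computation the formula $\eta^m\lambda=(m+1)!\,x_{m+2}+(\text{terms divisible by }x_1)$ holds only for $0\le m\le k-2$: the Morin normal form contains no monomial $x_1^kx_{k+1}$ (indeed, when $k=n$ the coordinate $x_{k+1}$ does not exist), and instead $\eta^{k-1}\lambda=(k+1)!\,x_1$. The conclusion survives, since $d\lambda(0),\ldots,d(\eta^{k-2}\lambda)(0)$ are nonzero multiples of $dx_2,\ldots,dx_{k}$ while $d(\eta^{k-1}\lambda)(0)=(k+1)!\,dx_1$, giving $k$ independent covectors; but your justification via ``pairwise distinct indices $x_2,\ldots,x_{k+1}$'' is wrong as written. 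Second, the invariance step needs slightly more than the single Leibniz identity you state: two admissible null fields are related by $\eta'=a\eta+\lambda V$ with $a(0)\ne0$ (Hadamard's lemma, using $d\lambda(0)\ne0$), and an induction shows $(\eta')^m\lambda\in a^m\eta^m\lambda+\spann{\lambda,\eta\lambda,\ldots,\eta^{m-1}\lambda}_{\E_n}$, from which both the vanishing pattern and the span of the differentials at $0$ are unchanged; this should be spelled out rather than asserted, since the rank condition, not only the pointwise values, must be shown independent of the choices.
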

%We remark that if $k=n$, the last condition
%of \eqref{eq:morinhanti} is equivalent to 
%\begin{equation}\label{eq:morinhantidet}
%\det
%d(\lambda,\eta\lambda,\ldots,\eta^{k-1}\lambda)(0)
%\ne 0.
%\end{equation}
We have the following lemma.
\begin{lemma}\label{lem:prenormal}
Let\/ $f\in C^\infty(n,n)$ be a\/ $k$-Morin singularity.
Assume\/ $k\ne1$.
Then\/ $f$ is\/ $\A$-isotopic to
\begin{equation}\label{eq:morinconn}
f_{(\ep_1,\ep_2)}^k(x)
=\Big(
\ep_1\big(
\ep_2x_2x_1+x_3x_1^2+\cdots+x_{k}x_1^{k-1}+x_1^{k+1}
\big),\ 
\ep_2x_2,\ x_3,\ \ldots,\ x_n\Big),
\end{equation}
where $x=(x_1,\ldots,x_n)$, 
$\ep_1=\pm1$ and\/ $\ep_2=\pm1$.
If\/ $k=1$, 
then\/ $f$ is\/ $\A$-isotopic to\/
$f_{\ep_1}^1=(\ep_1x_1^2,x_2,\ldots,x_n)$
where\/ $\ep_1=\pm1$.
\end{lemma}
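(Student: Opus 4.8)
The plan is to reduce everything to the equivalent formulation of $\A$-isotopy recorded just after the definition: with $r=k+1$ (the determinacy of a $k$-Morin germ), two $r$-determined, $\A$-equivalent germs are $\A$-isotopic exactly when they differ by composition with \emph{orientation-preserving} germs in source and target. Write $g_0$ for the standard $k$-Morin germ appearing in the definition of a $k$-Morin singularity. By hypothesis there are diffeomorphism-germs $\sigma,\tau$ of $(\R^n,0)$ with $f=\tau\circ g_0\circ\sigma$. The idea is to peel off the orientation-reversing part of each of $\sigma,\tau$ as a single \emph{fixed} coordinate reflection, using the index-two decomposition $\diff{n}{}=\diff{n}{+}\sqcup R\circ\diff{n}{+}$ (valid since, by the arc-wise connectedness of $\diff{n}{r,+}$ quoted in the excerpt, every orientation-reversing germ is $R$ composed with an orientation-preserving one), and then to choose $R$ so that it acts on $g_0$ by exactly one of the signs $\ep_1,\ep_2$.

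First I would treat $k\ge2$. Let $\rho$ be the source reflection $x_2\mapsto-x_2$ and $T$ the target reflection $y_1\mapsto-y_1$; both are orientation-reversing. Write $\tau=\tau^+\circ T^{a}$ and $\sigma=\rho^{\,b}\circ\sigma^+$ with $a,b\in\{0,1\}$ and $\sigma^+,\tau^+$ orientation-preserving, the exponent being $1$ precisely when the germ reverses orientation; substituting gives $f=\tau^+\circ\big(T^{a}\circ g_0\circ\rho^{\,b}\big)\circ\sigma^+$. The key computation is that these two reflections realize exactly the two signs in \eqref{eq:morinconn}: precomposing $g_0$ with $\rho$ turns the leading term $x_1x_2$ into $-x_1x_2$ and the second component into $-x_2$ (switching $\ep_2$), while postcomposing with $T$ negates the entire first component (switching $\ep_1$). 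Hence $T^{a}\circ g_0\circ\rho^{\,b}=f^k_{((-1)^a,(-1)^b)}$ on the nose, so $f=\tau^+\circ f^k_{((-1)^a,(-1)^b)}\circ\sigma^+$ with $\tau^+,\sigma^+$ orientation-preserving, which exhibits $f$ as $\A$-isotopic to $f^k_{(\ep_1,\ep_2)}$ with $\ep_1=(-1)^a$, $\ep_2=(-1)^b$.

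For $k=1$ the germ is the fold $g_0=(x_1^2,x_2,\dots,x_n)$, and here I would instead take the source reflection to be $R\colon x_1\mapsto-x_1$, which leaves $g_0$ invariant because $x_1$ occurs only squared. The orientation of the source diffeomorphism is then absorbed at no cost, $g_0\circ R^{\,b}\circ\sigma^+=g_0\circ\sigma^+$, and only the target sign survives: $f=\tau^+\circ T^{a}\circ g_0\circ\sigma^+=\tau^+\circ f^1_{(-1)^a}\circ\sigma^+$, giving that $f$ is $\A$-isotopic to $f^1_{\ep_1}$ with $\ep_1=(-1)^a$.

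The step I expect to require the most care is the choice of the source reflection when $k\ge2$: the naive choice $x_1\mapsto-x_1$ produces alternating signs $(-1)^j$ on the monomials $x_1^{\,j}x_{j+1}$ and would force an extra orientation-preserving reduction, whereas $x_2\mapsto-x_2$ lands directly on the prenormal form; getting the two reflections to hit $\ep_1$ and $\ep_2$ cleanly is the real content. Finally I would check that the Definition genuinely applies, i.e. that each $f^k_{(\ep_1,\ep_2)}$ is $\A$-equivalent to $f$ and is $r$-determined: both hold because $f^k_{(\ep_1,\ep_2)}$ is obtained from $g_0$ by composition with honest diffeomorphisms, hence is again a $k$-Morin singularity and so $(k+1)$-determined.
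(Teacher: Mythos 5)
Your proof is correct, and it takes a genuinely different route from the paper's. The paper re-derives Morin's normal form from scratch while policing orientations at every step: it normalizes $df(0)$ by a rotation (connectedness of $\{A\in O(n)\,|\,\det A>0\}$), invokes the Malgrange preparation theorem to write $x_1^{k+1}$ in terms of the components of $f$, and then constructs explicit changes $\phi,\Phi,\psi,\Psi$, inserting signs $\ep_0,\ep_2,\ep_3$ exactly where needed to keep every factor orientation-preserving; the surviving signs become the $\ep_1,\ep_2$ of the statement. You instead start from the $\A$-equivalence $f=\tau\circ g_0\circ\sigma$ that the paper's \emph{definition} of a $k$-Morin singularity hands you, split off the orientation-reversing parts of $\sigma,\tau$ as fixed reflections ($x_2\mapsto-x_2$ in the source, $X_1\mapsto-X_1$ in the target, and $x_1\mapsto-x_1$ in the source when $k=1$), and verify the exact identity $T^{a}\circ g_0\circ\rho^{\,b}=f^k_{((-1)^a,(-1)^b)}$ --- which is indeed correct, as is your observation that for $k\ge2$ the reflection $x_1\mapsto-x_1$ would be the wrong choice because of the alternating signs it produces on $x_1^{\,j}x_{j+1}$. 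Combined with the characterization of $\A$-isotopy by orientation-preserving conjugation stated after the definition (and the $(k+1)$-determinacy, which is an $\A$-invariant, so it passes to $f^k_{(\ep_1,\ep_2)}$), this proves the lemma. What each approach buys: yours is shorter and purely group-theoretic, reducing the lemma to orientation bookkeeping in the index-two decomposition of the diffeomorphism group, given the classical normal form; the paper's constructive reduction keeps $\lambda$ and the null vector field $\eta$ visible throughout, which is what makes the sign computation \eqref{eq:sgnlambda} and the invariants of Lemma \ref{lem:sgninv} fall out immediately afterwards. One cosmetic repair: the coset decomposition into a fixed reflection times an orientation-preserving germ does not need arc-wise connectedness of $\diff{n}{r,+}$ --- it holds simply because a composite of two orientation-reversing germs preserves orientation; connectedness is what you need (and do correctly use) to convert conjugation by orientation-preserving germs into $\A$-isotopy.
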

In what follows, we use the following notation:
For a given map-germ $(\R^n,0)\to(\R^n,0)$,
the small letters
$x=(x_1,\ldots,x_n)$ denote the coordinate system on the source space,
and the capital letters
$X=(X_1,\ldots,X_n)$ denote that of the target space.
Following a characterization of
Morin singularities given in \cite{morin},
and taking care to use only orientation-preserving 
diffeomorphism-germs, one can easily prove Lemma 
\ref{lem:prenormal}.
\begin{proof}[Proof of Lemma\/ {\rm \ref{lem:prenormal}}.]
Assume that 
$f(x)=(f_1(x),\ldots,f_n(x))$
is a $k$-Morin singularity.
Since $d\lambda(0)\ne0$, we have $\rank (df)(0)=n-1$.
Then by a rotation, we may assume that
$$
\left\{
\trans{(1,0,\ldots,0)},\ \grad(f_2)(0),\ \ldots,\ \grad(f_n)(0)
\right\}
$$
forms a positive
basis of $\R^n$,
where
$\grad(h)=dh=\trans{(h_{x_1},\ldots,h_{x_n})}$
and $h_{x_1}=\partial h/\partial x_1$, for example,
and $\trans{(\cdot)}$ means the transpose matrix.
Moreover, since $\{A\in O(n)\,|\,\det A>0\}$ is arc-wise connected,
we may assume that
$$\Big(\trans{(1,0,\ldots,0)},\grad(f_2)(0),\ldots,\grad(f_n)(0)\Big)=E,$$
where $E$ is the identity matrix.
Then the map-germ
$
x
\mapsto
\big(
x_1,
f_2(x),\ldots,
f_n(x)
\big)
$
is an orientation-preserving diffeomorphism-germ.
Hence we may assume that
$$
f(x_1,\ldots,x_n)
=\big(
f_1(x_1,\ldots,x_n),
x_2,\ldots,x_n\big).
$$
Then we can take the null vector field
$\eta=\partial_{x_1}$.
Since 
$\lambda=(f_1)_{x_1}$ and
$\eta^k\lambda(0)\ne0$,
it holds that
$f_1(x_1,0,\ldots,0)=ax_1^{k+1}+\cdots$ $(a\ne0)$.
Then by the Malgrange preparation theorem,
there exist functions $a_0,\ldots,a_k$
of $(X_1,\ldots,X_n)$ 
such that
\begin{equation}
\label{eq:mal1}
x_1^{k+1}
=
a_0\big(f(x)\big)-\Big(a_1\big(f(x)\big)x_1
+\cdots+a_{k}\big(f(x)\big)x_1^k\Big)\quad
\big(x=(x_1,\ldots,x_n)\big)
\end{equation}
holds.
Considering an orientation-preserving diffeomorphism-germ
$$
\phi(x)
=
\Big(
x_1+\frac{1}{k}a_{k}\big(f(x)\big),\ 
x_2,\ \ldots,\ x_n
\Big)
$$
and set $\tilde x=(\tilde x_1,\ldots,\tilde x_n)
=\phi(x)$.
Then by a direct calculation, there exist
functions $b_0,\ldots,b_{k-1}$ such that
\begin{equation}
\label{eq:mal2}
\tilde x_1^{k+1}
=
b_0\Big(f\circ\phi^{-1}(\tilde x)\Big)-
\sum_{i=1}^{k-1}\tilde x_1^{i} b_{i}\Big(f\circ\phi^{-1}(\tilde x)\Big).
\end{equation}
Differentiating the equation \eqref{eq:mal2} by $\tilde x_1$,
we see that $b_0(0)=\cdots =b_{k-1}(0)=0$.
Furthermore, setting $\tilde x_2=\cdots =\tilde x_k=0$ in \eqref{eq:mal2}
and expanding both sides in powers of $\tilde x_1$,
we see that
$(b_1)_{X_1}(0)\ne0$.
Thus 
$$
\Phi(X)
=
\big(\ep_0b_0(X),X_2\ldots,X_n\big)\quad
\big(X=(X_1\ldots,X_n)\big)
$$
is an orientation-preserving diffeomorphism-germ,
where, $\ep_0=\sign((b_1)_{X_1}(0))=\pm1$.
Then we see that $f$ is $\A$-isotopic to
\begin{equation}\label{eq:f1}
\Phi\circ f\circ \phi^{-1}(\tilde x)
=
\Bigg(
\ep_0\bigg\{
\tilde x_1^{k+1}
-\sum_{i=1}^{k-1}
\tilde x_1^{i} b_{i}\Big(f\circ\phi^{-1}(\tilde x)\Big)
\bigg\},\ 
\tilde x_2,\ \ldots,\ \tilde x_n\Bigg).
\end{equation}
If $k=1$, we have the assertion.
We assume $k>1$ in what follows.
Since the condition
$\rank d(\lambda,\eta\lambda,\ldots,\eta^{k-1}\lambda)(0)
=k$ does not depend on the coordinate system,
we may assume that 
$\lambda=(\partial/\partial \tilde x_1)\big\{
\tilde x_1^{k+1}
-\sum_{i=1}^{k-1}
\tilde x_1^{i} b_{i}\big(f\circ\phi^{-1}(\tilde x)\big)
\big\}$ and $\eta=\partial \tilde x_1$.
Thus we see that
$$
\grad\bigg(b_1\Big(f\circ\phi^{-1}(\tilde x)\Big)\bigg)(0),\ \ldots,\ 
\grad\bigg(b_{k-1}\Big(f\circ\phi^{-1}(\tilde x)\Big)\bigg)(0),\ 
\grad \tilde x_1(0)
$$
are linearly independent.
Thus
$$
\begin{array}{rcl}
\psi(\tilde x)&=&
\bigg(\tilde x_1,\ 
\ep_2b_1\Big(f\circ\phi^{-1}(\tilde x)\Big),\ 
\ldots,
b_{k-1}\Big(f\circ\phi^{-1}(\tilde x)\Big),\ 
\tilde x_{k+1},\ \ldots,\ \tilde x_n\bigg)\\
\Psi(X)
&=&
\big(\ep_3b_0(X),\ b_1(X),\ \ldots,\ b_{k-1}(X),\ 
X_{k+1},\ \ldots,\ X_n\big)
\end{array}
$$
are orientation-preserving diffeomorphism-germs
for some $\ep_2=\pm1$ and $\ep_3=\pm1$,
and we see that $f$ is $\A$-isotopic to
$\Psi\circ f\circ \phi^{-1}\circ \psi^{-1}$.
Setting $\ep_1=\ep_0\ep_3$, 
we complete the proof.
\end{proof}
Let $f$ be a map-germ
of the form
\eqref{eq:morinconn}.
Since it holds that
$$\lambda=
\ep_1\ep_2
\dfrac{\partial}{\partial x_1}
\big(\ep_2x_2x_1+x_3x_1^2+\cdots+x_{k}x_1^{k-1}+x_1^{k+1}\big),
\quad
\eta=\partial x_1,
$$
we have
\begin{equation}
\label{eq:sgnlambda}
\sign\big(\eta^{k}\lambda(0)\big)=\ep_1\ep_2,\quad
\sign\det \grad(\lambda,\eta\lambda,\ldots,\eta^{n-1}\lambda)(0)
=
(-1)^{n-1}\ep_1^n\ep_2^{n+1},
\end{equation}
where
$
\grad(\lambda,\eta\lambda,\ldots,\eta^{n-1}\lambda)
=
(\grad \lambda,\grad \eta\lambda,\ldots,\grad \eta^{n-1}\lambda).
$
By orientation-preserving diff\-eo\-mor\-phism-germs
on source and target,
$\lambda$ is multiplied by a positive function.
On the other hand,
reversing the direction of $\eta$, the sign of 
$\eta\lambda$ changes.
Summarizing the above arguments,
we have the following lemma.
\begin{lemma}\label{lem:sgninv}
Let\/ $f\in C^\infty(n,n)$ be a\/ $k$-Morin singularity.
If\/ $k$ is even, then\/
$\sign(\eta^{k}\lambda)=\ep_1\ep_2$ is an invariant
of\/ $\A$-isotopy.
If\/ $k=n$ and\/ $1+\cdots+n-1=(n-1)n/2$ is even,
then\/ $\sign\det d(\lambda,\ldots,\eta^{n-1}\lambda)=
(-1)^{n-1}\ep_1^n\ep_2^{n+1}$ 
is an invariant
of\/ $\A$-isotopy.
Furthermore, if\/ $k=n$ and,
$n$ and\/ $(n-1)n/2$ are both odd,
then
$$\sign\Big(\eta^{k}\lambda\cdot
\det \grad( \lambda,\eta\lambda,\ldots,\eta^{k-1}\lambda)\Big)=
(-1)^{n-1}\ep_1^{n+1}\ep_2^n$$ 
is an invariant
of\/ $\A$-isotopy.
\end{lemma}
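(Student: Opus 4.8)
The plan is to exploit the two elementary facts recorded just above the statement: under orientation-preserving changes of coordinates in source and target the Jacobian $\lambda$ is replaced by $u\cdot(\lambda\circ\sigma)$ with $u(0)>0$, while the null-vector field $\eta$ is canonically determined by $f$ only up to multiplication by a nowhere-vanishing function $c$ (in particular its direction may be reversed). Since each of the three quantities in the statement is a sign, hence locally constant, and since the Morin conditions \eqref{eq:morinhanti} persist along any $\A$-isotopy (so that $\eta^{k}\lambda(0)$ and $\det\grad(\lambda,\ldots,\eta^{k-1}\lambda)(0)$ never vanish), it will suffice to check which of the proposed combinations are left unchanged by these two ambiguities; the asserted values then follow by reading off \eqref{eq:sgnlambda} for the normal form $f^{k}_{(\ep_1,\ep_2)}$ of Lemma \ref{lem:prenormal}.

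First I would record how the two basic objects transform at the origin, using repeatedly that $\lambda(0)=\eta\lambda(0)=\cdots=\eta^{k-1}\lambda(0)=0$. Treating $\eta$ as a derivation and applying the Leibniz rule, every term of $\eta^{k}(u\lambda)$ except the leading one carries a factor $\eta^{i}\lambda$ with $i<k$, which vanishes at $0$; hence $\eta^{k}(u\lambda)(0)=u(0)\,\eta^{k}\lambda(0)$, and likewise the pullback $\tilde\eta=\sigma^{*}\eta$ gives $\tilde\eta^{k}\tilde\lambda(0)=u(0)\,\eta^{k}\lambda(0)$. Under the rescaling $\eta\mapsto c\eta$ the same induction yields $\tilde\eta^{k}\lambda(0)=c(0)^{k}\,\eta^{k}\lambda(0)$. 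Thus $\sign\big(\eta^{k}\lambda(0)\big)$ is multiplied by $\sign\big(c(0)\big)^{k}$, which equals $+1$ for every admissible $c$ exactly when $k$ is even; this is the first assertion, with value $\ep_1\ep_2$ by \eqref{eq:sgnlambda}.

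Next I would treat the $k=n$ determinant. The key point is that, at the origin, the column $\grad(\eta^{j}\lambda)(0)$ transforms triangularly: Leibniz together with the vanishing of $\eta^{i}\lambda(0)$ for $i<n$ kills every off-diagonal contribution (each such term carries a factor $\eta^{i}\lambda(0)$), so under a coordinate change the matrix becomes $\trans{J\sigma(0)}\,M\,T$, where $M=\grad(\lambda,\ldots,\eta^{n-1}\lambda)(0)$ and $T$ is upper-triangular with constant diagonal $u(0)$, whereas under $\eta\mapsto c\eta$ it becomes $M\,D$ with $D$ triangular and diagonal $c(0)^{0},c(0)^{1},\ldots,c(0)^{n-1}$. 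Taking determinants, the sign is multiplied by $\sign\det J\sigma(0)\cdot u(0)^{n}>0$ in the first case and by $\sign\big(c(0)\big)^{(n-1)n/2}$ in the second. Hence $\sign\det\grad(\lambda,\ldots,\eta^{n-1}\lambda)(0)$ is an isotopy invariant precisely when $(n-1)n/2$ is even, with value $(-1)^{n-1}\ep_1^{n}\ep_2^{n+1}$; and the product $\eta^{k}\lambda\cdot\det\grad(\lambda,\ldots,\eta^{k-1}\lambda)$ changes its sign by $\sign\big(c(0)\big)^{\,n+(n-1)n/2}$, which is $+1$ for all $c$ iff $n+(n-1)n/2$ is even, i.e.\ iff $n$ and $(n-1)n/2$ are both odd; the value $(-1)^{n-1}\ep_1^{n+1}\ep_2^{n}$ then follows by multiplying the two entries of \eqref{eq:sgnlambda} and using $\ep_2^{2}=1$.

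The step I expect to be the main obstacle is the bookkeeping for the determinant: one must verify carefully that, at the origin, the change of the $j$-th column mixes in only columns of strictly lower index, so that the change of basis is genuinely triangular, and that its diagonal entry is exactly $u(0)$ (respectively $c(0)^{j}$). Once this is established, the positivity of $\det J\sigma(0)$ and of $u(0)^{n}$, and the parity arithmetic of the exponents $k$, $(n-1)n/2$ and their sum, are entirely routine.
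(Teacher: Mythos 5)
Your proof is correct and follows essentially the same route as the paper: the paper likewise combines the normal-form computation \eqref{eq:sgnlambda} with the two transformation facts that orientation-preserving changes of coordinates multiply $\lambda$ by a positive function and that $\eta$ is only determined up to a nonvanishing (possibly sign-reversing) factor, then reads off which sign combinations survive. The only difference is one of detail: the paper states these facts in one line and says ``summarizing the above arguments,'' whereas you carry out the Leibniz-rule and triangular-matrix bookkeeping explicitly, which is exactly the verification the paper leaves implicit.
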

Now we consider $\A$-isotopy of 
$f_{(\ep_1,\ep_2)}^k(x_1,\ldots,x_n)$.
By the above lemma, in
the case of $k=n$, 
we consider four cases 
$k=n=4l,\ 4l+1,\ 4l+2,\ 4l+3$.

\subsection{The case $k=n=4l$}
By Lemma \ref{lem:sgninv}, we see that
$\ep_1\ep_2$ and
$-\ep_1^{4l}\ep_2^{4l+1}=-\ep_2$
are invariants
of $\A$-isotopy.
Thus 
if
$(\ep_1,\ep_2)\ne(\ep'_1,\ep'_2)$,
then
$f_{(\ep_1,\ep_2)}^k$ and $f_{(\ep'_1,\ep'_2)}^k$
are not $\A$-isotopic.
We remark that $\ep_1\ep_2$ is known as the local degree of $f$.
The algebraic sum of it is related to the topology of
the source and the target manifolds 
(See \cite{fukuda,quine,saeki} for the $\Z_2$-case, and
\cite{suyak} for the $\Z$-case. See also \cite{df}).

\subsection{The case $k=n=4l+1$ and $l\ne0$}\label{sec:odd1}
Here, we use the following terminology:
Let $I$ be a set of indices such that
$\#I$ is even.
Then the {\em $\pi$-rotations of\/ $I$\/} stands for the diffeomorphism-germ
$(x_1,\ldots,x_{k})\mapsto
(\tilde x_1,\ldots,\tilde x_{k}),$
where $\tilde x_j=\varepsilon x_j$ if $j\in I$,
and $\tilde x_j=x_j$ if $j\not\in I$, with $\varepsilon=-1$.
We see that 
applying 
$\pi$-rotations on the source space and on the target space
does not change the $\mathcal{A}$-isotopy class.

We assume that $\ep_2=-1$. Since $l\ne0$, the number
$
\#\{1,4,6,\ldots,4l\}
$
is even. 
By $\pi$-rotations of
$
\{1,4,6,\ldots,4l\}
$
on the source space, we see that
$f_{(\ep_1,\ep_2)}^{4l+1}$ 
is $\A$-isotopic to
\begin{equation}\label{eq:4l11}
\Big(
\ep_1\big(
x_2x_1+x_3x_1^2+\cdots+x_{k}x_1^{k-1}+x_1^{k+1}
\big),\ 
\ep_2x_2,\ x_3,\ \ep_2x_4,x_5,\ldots,\ \ep_2x_{4l},\ x_{4l+1}\Big).
\end{equation}
Then by $\pi$-rotations of
$
\{2,4,\ldots,4l\}
$ (even number)
on the target space,
we see that $f_{(\ep_1,\ep_2)}^{4l+1}$ is $\A$-isotopic to
$f_{(\ep_1,1)}^{4l+1}$.
On the other hand,
by Lemma \ref{lem:sgninv}, we see that
$(-1)^{4l}\ep_1^{4l+1}\ep_2^{4l}=\ep_1$
is an invariant of $\A$-isotopy.
Thus if $\ep_1\ne\ep'_1$, then
$f_{(\ep_1,1)}^k$ and $f_{(\ep'_1,1)}^k$ are
not $\A$-isotopic.
\subsection{The case $k=n=1$}
We see that the sign of $f_{x_1x_1}^1(0)$ is
an invariant of $\A$-isotopy.
Thus for a given $1$-Morin singularity $f:(\R^1,0)\to(\R^1,0)$,
if $f_{x_1x_1}^1(0)>0$ (respectively, $f_{x_1x_1}^1(0)<0$),
$f$ is $\A$-isotopic to $x_1^2$ (respectively, $-x_1^2$).
\subsection{The case $k=n=4l+2$}
We assume that $\ep_2=-1$. By $\pi$-rotations of
$
\{1,2,\underbrace{3,5,\ldots,4l+1}_{\substack{\text{even}}}\}
$
on the source space, we see that
$f_{(\ep_1,\ep_2)}^k$ is
$\A$-isotopic to
$$
\Big(
\ep_1\ep_2\big(
x_2x_1+x_3x_1^2+\cdots+x_{k}x_1^{k-1}+x_1^{k+1}
\big),\ 
x_2,\ \ep_2x_3,\ x_4,\ \ep_2x_5,\ \ldots,
\ \ep_2x_{4l+1},\ x_{4l+2}\Big).
$$
Then by $\pi$-rotations of
$
\{3,5,\ldots,4l+1\}
$
on the target space, it is $\A$-isotopic to
$f_{(\ep_1\ep_2,1)}^k$.
On the other hand,
by Lemma \ref{lem:sgninv}, we see that
if $\ep_1\ep_2$ is an invariant of the $\A$-isotopy.
Hence $f_{(\ep,1)}^k$ and $f_{(\ep',1)}^k$ are
not $\A$-isotopic if $\ep\ne\ep'$, where $\ep,\ep'\in\{\pm1\}$.
Like as in the case of $k=n=4l$, the invariant
$\ep_1\ep_2$ is known as the local degree of $f$,
and algebraic sum of it is related to the topology of
the source and target manifolds (\cite{df,fukuda,quine,saeki,suyak}).
\subsection{The case $k=n=4l+3$}\label{sec:odd2}
We assume that $\ep_1=-1$. By $\pi$-rotations of
$
\{1,2,4,\ldots,4l+2\}
$ (even number)
on the source space, $f_{(\ep_1,\ep_2)}^k$ is $\A$-isotopic to
\begin{equation}
\begin{array}{l}
\Big(
\ep_1\big(
\ep_2x_2x_1+x_3x_1^2+\cdots+x_{k}x_1^{k-1}+x_1^{k+1}
\big),\\
\hspace{30mm}
\ep_1\ep_2x_2,\ x_3,\ \ep_1x_4,\ x_5,\ \ldots,\ \ep_1x_{4l+2},\ x_{4l+3}\Big).
\end{array}
\end{equation}
Again by $\pi$-rotations of
$
\{1,2,4,\ldots,4l+2\}
$ 
on the target space,
we see that $f_{(\ep_1,\ep_2)}^k$ is $\A$-isotopic to
$f_{(1,\ep_2)}^k$.
On the other hand,
by Lemma \ref{lem:sgninv}, the sign
$\ep_1\ep_2\cdot(-1)^{4l+2}\ep_1^{4l+3}\ep_2^{4l+4}=\ep_2$ is
an invariant of $\A$-isotopy.
Thus if $\ep_2\ne\ep'_2$, then
$f_{(1,\ep_2)}^k$ and $f_{(1,\ep'_2)}^k$ are
not $\A$-isotopic.
This invariant is related to the Vassiliev type invariants
of singularities, since we consider isotopy (see \cite{go}).
In \cite{go}, its global properties are also investigated.
See \cite{bs} for another interpretation.

\subsection{The case $n>k$}
If $n>k$, then by a $\pi$-rotation $\{2,n\}$ on the source space,
and by a $\pi$-rotation $\{1,n\}$ on the target space,
we see that $f_{(\ep_1,\ep_2)}^k$ is $\A$-isotopic to
$f_{(\ep_1\ep_2,1)}^k$.
If $k$ is even, by Lemma \ref{lem:sgninv},
$f_{(\ep,1)}^k$
is $\A$-isotopic to
$f_{(\ep',1)}^k$
if and only if $\ep=\ep'$.
If $k$ is odd and $k=4l+1$ ($l\ne0$) 
(respectively, $k=4l+3$), 
assume that $\ep=-1$.
Then by $\pi$-rotations $\{1,2,4,\ldots,4l,n\}$ 
(respectively, $\{1,2,4,\ldots,4l+2\}$) on the source space,
we see that $f_{(\ep,1)}^k$ is $\A$-isotopic to
$$
\begin{array}{l}
\Big(
\ep\big(
x_2x_1+x_3x_1^2+\cdots+x_{4l+1}x_1^{4l}+x_1^{4l+2}
\big),\\
\hspace{30mm}
\ep x_2,\ x_3,\ \ep x_4,\ \ldots,
\ep x_{4l},\ x_{4l+1},\ \ldots,\ x_{n-1},\ 
\ep x_n\Big)\end{array}
$$
if $k=4l+1$, and
$$
\Big(
\ep\big(
x_2x_1+x_3x_1^2+\cdots+x_{4l+1}x_1^{4l}+x_1^{4l+2}
\big),\ 
\ep x_2,\ x_3,\ \ep x_4,\ \ldots,\ 
\ep x_{4l+2},\ x_{4l+3},\ \ldots,\ x_n\Big)
$$
if $k=4l+3$.
Then we easily see that these germs are 
$\A$-isotopic to $f_{(1,1)}^k$.
Furthermore,
in the case of $n>k=1$,
one can easily see that $f_{\ep_1}^1$
is $\A$-isotopic to $f_{1}^1$.

We remark that in the case of $k=1$ and $n>1$,
$1$-Morin singularities are also called {\em folds}.
Thus 
all folds are $\A$-isotopic to $(x_1^2,x_2,\ldots,x_n)$.
This is a special case of Ando's result
which claims that the homotopy types of 
the set of $r$-jets of folds are $O(n)$ \cite[p.169]{ando2}.

\section{Normal forms and invariants}\label{sec:normalform}
We summarize the normal forms and invariants
for each case.
The case of $k=n$ is shown in Table \ref{tab:matome1}
and
the case of $k<n$ is shown in Table \ref{tab:matome2},
where $\#$ indicates the number of $\A$-isotopy classes.
\begin{table}[htbp]
\centering
\begin{tabular}{|c|c|c|c|c|c|}
\hline
name&$k$&    normal&invariants&$\#$\\
 &&form&&\\
\hline
fold&1&$f_{\ep_1}^1$
&$\eta^2 f$&2\\
\hline
cusp&2&$f_{(\ep_1,1)}^k$
&$\eta^2\lambda=\ep_1$
&2\\
\hline
swallowtail&3&$f_{(1,\ep_2)}^k$
&$\eta^3\lambda\,\det \grad(\lambda,\eta\lambda,\eta^2\lambda)=\ep_2$
&2\\
\hline
butterfly&4&$f_{(\ep_1,\ep_2)}^k$
&\big($\eta^4\lambda,\det \grad(\lambda,\ldots,\eta^3\lambda)\big)$
&4\\
&&&\hspace{30mm}$=(\ep_1\ep_2,-\ep_2)$&\\
\hline
&$5$&$f_{(\ep_1,1)}^k$
&$\det \grad(\lambda,\ldots,\eta^4\lambda)=\ep_1$
&2\\
\hline
\vdots&\vdots&\vdots&\vdots&\vdots\\
\hline
&$4l$&$f_{(\ep_1,\ep_2)}^k$
&\big($\eta^{4l}\lambda,
\det \grad(\lambda,\ldots,\eta^{4l-1}\lambda)\big)$
&4\\
&&&\hspace{30mm}$=(\ep_1\ep_2,-\ep_2)$&\\
\hline
&$4l+1$&$f_{(\ep_1,1)}^k$
&$\det \grad(\lambda,\ldots,\eta^{4l}\lambda)=\ep_1$
&2\\
\hline
&$4l+2$&$f_{(\ep_1,1)}^k$
&$\eta^{4l+2}\lambda=\ep_1$&2\\
\hline
&$4l+3$&$f_{(1,\ep_2)}^k$
&$\eta^{4l+3}\lambda\,\det \grad(\lambda,\ldots,\eta^{4l+2}\lambda)=\ep_2$
&2\\
\hline
\end{tabular}
\caption{$\A$-isotopy classes of $n$-Morin singularities in $C^\infty(n,n)$.}
\label{tab:matome1}
\end{table}

\begin{table}[htbp]
\centering
\begin{tabular}{|c|c|c|c|c|c||c|}
\hline
name&$k$    &\hspace{3mm}normal form\hspace{3mm}
&invariants&$\#$\\
\hline
fold ($\times$ intervals)&1
&$f_{1}^1$&-&1\\
\hline
cusp ($\times$ intervals)&2
&$f_{(\ep_1,1)}^k$&$\eta^2\lambda=\ep_1$
&2\\
\hline
swallowtail ($\times$ intervals)&3&$f_{(1,1)}^k$
&-&1\\
\hline
$\vdots$&$\vdots$&$\vdots$&$\vdots$&$\vdots$\\
\hline
 &$2m$
&$f_{(\ep_1,1)}^k$&$\eta^{2m}\lambda=\ep_1$&2\\
\hline
 &$2m+1$&$f_{(1,1)}^k$&-&1\\
\hline
\end{tabular}
\caption{$\A$-isotopy classes of $k$-Morin singularities in $C^\infty(n,n)$.}
\label{tab:matome2}
\end{table}
\section{Other singularities}\label{sec:other}
In this section, we consider $\A$-isotopy
for other well-known low-dimensional
singularities.
\subsection{Codimension one map-germs from the plane into the plane}
Classification up to $\A$-equivalence 
for map-germs from the plane into the plane 
is given by Rieger \cite{rieger}.
He classified map-germs $(\R^2,0)\to(\R^2,0)$ with corank one
and $\A_e$-codimension $\leq 6$.
Table \ref{tab:rie}
shows the list of the $\A_e$-codimension $\leq 1$
local singularities obtained in \cite{rieger}.
\begin{table}[!htbp]
\centering
\begin{tabular}{|c|c|c|}
\hline
name&normal form&$\A_e$-codimension\\
\hline
fold&$(x_1^2,x_2)$&0\\
\hline
cusp&$(x_1^3+x_1x_2,x_2)$&0\\
\hline
lips&$(x_1^3+ x_1x_2^2,x_2)$&1\\
\hline
beaks&$(x_1^3- x_1x_2^2,x_2)$&1\\
\hline
(planar) swallowtail&$(x_1^4+x_1x_2,x_2)$&1\\
\hline
\end{tabular}
\caption{Classification of $C^\infty(2,2)$}
\label{tab:rie}
\end{table}
Folds and cusps are Morin singularities.
Recognition criteria for other singularities
are given in \cite{sajihiro}:
\begin{lemma}
Let\/ $f\in C^\infty(2,2)$ be a map-germ.
\begin{enumerate}
\item[{\rm (1)}] $f$ is\/ $\A$-equivalent to a lips if
and only if\/
$d\lambda=0$, $\det\hess\lambda>0$ at\/ $0$.
\item[{\rm (2)}] $f$ is\/ $\A$-equivalent to a beaks if
and only if\/
$d\lambda=0$, $\det\hess\lambda<0$,
$\eta\eta\lambda\ne0$ at\/ $0$.
\item[{\rm (3)}] $f$ is\/ $\A$-equivalent to a\/ $($planar$)$ swallowtail if
and only if\/
$d\lambda\ne0$, $\eta\lambda=\eta\eta\lambda=0$,
$\eta\eta\eta\lambda\ne0$ at\/ $0$.
\end{enumerate}
\end{lemma}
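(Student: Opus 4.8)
The plan is to reduce $f$ to a prenormal form and then read all three criteria off the jet of a single function, handling the ``only if'' direction by invariance and the ``if'' direction by finite determinacy. First I would note that each germ in Table~\ref{tab:rie} has corank one at $0$; for the swallowtail this is automatic, since a corank-two point forces every entry of the Jacobi matrix to vanish at $0$, hence $\lambda$ to vanish to second order and $d\lambda(0)=0$, contrary to (3). Granting corank one, I would argue exactly as in the proof of Lemma~\ref{lem:prenormal} (now using arbitrary, not necessarily orientation-preserving, diffeomorphism-germs) that
$$f(x_1,x_2)=\big(g(x_1,x_2),\,x_2\big),$$
so that $\lambda=g_{x_1}$, the null field is $\eta=\partial_{x_1}$, and $\eta^{j}\lambda=\partial_{x_1}^{\,j+1}g$; a target change $(X_1,X_2)\mapsto(X_1-c(X_2),X_2)$ further normalises $g(0,x_2)=0$. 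Every hypothesis now becomes an explicit condition on the jet of $g$ at $0$: the equation $d\lambda(0)=0$ reads $g_{x_1x_1}(0)=g_{x_1x_2}(0)=0$, one has $\hess\lambda(0)=\pmt{g_{x_1x_1x_1}&g_{x_1x_1x_2}\\ g_{x_1x_1x_2}&g_{x_1x_2x_2}}(0)$, while $\eta^{2}\lambda(0)=g_{x_1x_1x_1}(0)$ and $\eta^{3}\lambda(0)=g_{x_1x_1x_1x_1}(0)$.

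For the ``only if'' direction I would substitute the three normal forms into these formulas (for lips and beaks $\det\hess\lambda(0)$ equals $12$ and $-12$ respectively, and $\eta^{2}\lambda(0)=6\ne0$ for beaks; for the swallowtail $d\lambda(0)=(0,1)$ and $\eta\lambda(0)=\eta^{2}\lambda(0)=0$, $\eta^{3}\lambda(0)=24\ne0$) and then check $\A$-invariance. The key point is that each quantity is intrinsic: $\sign\det\hess\lambda(0)$ records whether the singular set $S(f)=\lambda^{-1}(0)$ is an isolated point or a transverse pair of branches, and at a degenerate singular point ($d\lambda(0)=0$) one has $\eta^{2}\lambda(0)=\hess\lambda(0)\big(\eta(0),\eta(0)\big)$, which depends only on the kernel line $\ker df(0)$ up to a positive scalar, so its sign and nonvanishing are $\A$-invariants; for the swallowtail $\eta\lambda,\eta^{2}\lambda,\eta^{3}\lambda$ measure the contact order of the null direction with the smooth curve $S(f)$ and are invariant as in the Morin case.

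The substantive direction is sufficiency, and here I would match jets and invoke finite determinacy (lips and beaks are $3$-determined, the swallowtail is $4$-determined; alternatively one could match $f$ against Rieger's list, but that presupposes finite determinacy of low codimension, so the self-contained route is determinacy plus jet normalisation). For the swallowtail the conditions give $g(x_1,0)=cx_1^{4}+O(x_1^{5})$ with $c\ne0$ and $g_{x_1x_2}(0)\ne0$, and the normalisation to $(x_1^{4}+x_1x_2,x_2)$ runs along the same Malgrange-preparation lines as Lemma~\ref{lem:prenormal}, after which $4$-determinacy finishes the argument. For lips and beaks the quadratic part of $\lambda$ is the definite form $\tfrac12\hess\lambda(0)$ in the lips case and an indefinite form with $\eta^{2}\lambda(0)\ne0$ in the beaks case; I would normalise it by admissible source and target changes to $3x_1^{2}\pm x_2^{2}$, reconstruct the $3$-jet of $g$ as $x_1^{3}\pm x_1x_2^{2}$ (the normalisation $g(0,x_2)=0$ absorbing the integration constants), and kill the $O(4)$ terms by $3$-determinacy.

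The main obstacle I expect is this jet-normalisation step in the presence of $d\lambda(0)=0$. For beaks one must reduce the indefinite Hessian of $\lambda$ while keeping the $x_1$-axis fixed as the kernel line $\ker df(0)$, which is exactly where $\eta^{2}\lambda(0)\ne0$ is used to guarantee transversality of the null direction to the asymptotic cone of $\hess\lambda(0)$. The delicate part is to perform these reductions using only the coordinate changes that act on the relevant jet space, and to verify that the signature of $\hess\lambda(0)$ together with $\eta^{2}\lambda(0)\ne0$ is the \emph{only} remaining modulus, so that no further invariant can separate $f$ from its normal form within the determinacy order.
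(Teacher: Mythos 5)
You should first be aware that the paper does not prove this lemma at all: it is imported verbatim from \cite{sajihiro} (the sentence introducing it reads ``Recognition criteria for other singularities are given in \cite{sajihiro}''), so there is no in-paper argument to compare yours against. Judged on its own terms, your skeleton --- prenormal form $(g(x_1,x_2),x_2)$ as in Lemma \ref{lem:prenormal}, so that $\lambda=g_{x_1}$, $\eta=\partial_{x_1}$, $\eta^{j}\lambda=\partial_{x_1}^{j+1}g$; invariance of the stated quantities for necessity; jet normalisation plus Rieger's determinacy \cite{rieger} for sufficiency --- is exactly the toolbox this paper uses for the isotopy theorem that immediately follows the lemma, and your concrete computations check out: the values $\pm12$, $6$, $24$ on the normal forms; the invariance of $\sign\det\hess\lambda(0)$ (it transforms by the square of a nonzero factor); and the lips/beaks reduction, where $\eta\eta\lambda(0)\ne0$ lets a triangular source change kill the cross term of the quadratic part of $\lambda$, and the normalisation $g(0,x_2)=0$ absorbs the $x_2^3$ coefficient, giving the $3$-jet $(x_1^3\pm x_1x_2^2,x_2)$. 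Your remark that the lemma must be read with the implicit corank-one hypothesis is also correct and necessary: without it, item (1) is simply false, e.g.\ $(x_1^2-x_2^2,x_1x_2)$ satisfies $d\lambda(0)=0$ and $\det\hess\lambda(0)>0$ but has corank two.

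Two criticisms, one minor and one a genuine gap in the sketch. Minor: the \emph{sign} of $\eta\eta\lambda(0)$ is not an $\A$-invariant, only its nonvanishing is; indeed $(x_1(x_1^2+x_2^2),x_2)$ and $(-x_1(x_1^2+x_2^2),x_2)$ are $\A$-equivalent via $X_1\mapsto-X_1$ yet have $\eta\eta\lambda(0)=\pm6$. The sign is an invariant of $\A$-\emph{isotopy} --- that is precisely the content of the theorem following this lemma --- and conflating the two notions is the very distinction this paper is about; the slip is harmless here only because criterion (2) uses nonvanishing alone. The gap: for the swallowtail you claim the normalisation ``runs along the same Malgrange-preparation lines as Lemma \ref{lem:prenormal}.'' It cannot, not literally. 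The final step of that proof converts $b_1(f),\ldots,b_{k-1}(f)$ into source coordinates using the rank condition $\rank d(\lambda,\eta\lambda,\eta^{2}\lambda)(0)=3$, which cannot hold in two variables ($k=3>n=2$). Here $d\lambda(0)\ne0$ lets you make only $b_1\circ f$ a coordinate; a residual term $x_1^2\,b_2(f)$ survives, so what you actually reach is a $4$-jet of the form $\big(x_1^4+x_1x_2(1+\nu x_1+\cdots),x_2\big)$, and showing this lies in the jet-group orbit of $(x_1^4+x_1x_2,x_2)$ is an additional finite-dimensional computation (Rieger's complete-transversal argument over the $2$-jet $(x_1x_2,x_2)$) that must be carried out \emph{before} $4$-determinacy can finish. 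That step is standard and fillable, but it is exactly the part your sketch elides, and it is the only place where the proof requires an idea not already present in Lemma \ref{lem:prenormal}.
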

We have the following theorem.
\begin{theorem}
Let\/ $f\in C^\infty(2,2)$ be a map-germ.
\begin{enumerate}
\item If\/ $f$ is\/ $\A$-equivalent to a lips,
and\/ $\sign\eta\eta\lambda=\ep$,
then\/ $f$ is\/ $\A$-isotopic to\/
$(\ep x_1(x_1^2+x_2^2),x_2)$.
Moreover these two map-germs are not\/ $\A$-isotopic.
\item If\/ $f$ is\/ $\A$-equivalent to a beaks,
and\/ $\sign\eta\eta\lambda=\ep$,
then\/ $f$ is\/ $\A$-isotopic to\/
$(\ep x_1(x_1^2-x_2^2),x_2)$.
Moreover these two map-germs are not\/ $\A$-isotopic.
\item If\/ $f$ is\/ $\A$-equivalent to a\/ $($planar$)$ swallowtail,
and\/ $\sign(\xi\lambda\,\eta\eta\eta\lambda)=\ep$,
then\/ $f$ is\/ $\A$-isotopic to\/
$(\ep x_1x_2+x_1^4,x_2)$,
where\/ $\xi$ is a vector field such that\/
$(\xi,\eta)$ is a positive frame at\/ $0$.
Moreover these two map-germs are not\/ $\A$-isotopic.
\end{enumerate}
Here, $\ep=\pm1$.
\end{theorem}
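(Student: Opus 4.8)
The plan is to treat all three assertions by the two-step scheme already used for the Morin case: first identify the displayed sign as an invariant of $\A$-isotopy, which yields the ``not $\A$-isotopic'' conclusions, and then reduce $f$ to the stated normal form using only orientation-preserving diffeomorphism-germs, keeping track of the sign throughout.

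For the invariance I would argue exactly as in the derivation of Lemma \ref{lem:sgninv}. Writing $g=\tau\circ f\circ\sigma$ with $\sigma,\tau$ orientation-preserving, the chain rule gives $\lambda_g=\rho\cdot(\lambda_f\circ\sigma)$, where $\rho$ is the product of the Jacobian determinants of $\tau$ and $\sigma$, hence positive, and the null direction transforms by $d\sigma$. For a lips or a beaks one has $d\lambda(0)=0$, so $\eta\eta\lambda(0)$ equals the Hessian quadratic form $\hess\lambda(0)$ evaluated on the null direction; the vanishing of the first-order data makes every cross term in the transformation law drop out, so $\eta\eta\lambda(0)$ is multiplied by $\rho(0)$ times the square of the scaling of $\eta$, and its sign is preserved. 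For the planar swallowtail $d\lambda(0)\ne0$ but $\eta\lambda(0)=\eta\eta\lambda(0)=0$, and the same Leibniz computation shows that $\eta\eta\eta\lambda(0)$ is multiplied by $\rho(0)$ times the cube of the scaling $c$ of $\eta$, whereas $\xi\lambda(0)$ acquires the sign governed by how the frame $(\xi,\eta)$ transforms. Since $\sigma$ is orientation-preserving and $(\xi,\eta)$ is required to be positive, the sign of $c$ is exactly compensated by that of $\xi\lambda$, so it is the product $\xi\lambda\cdot\eta\eta\eta\lambda$, and neither factor alone, that is preserved; this mirrors the odd $k=n$ case of Lemma \ref{lem:sgninv}. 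A direct evaluation on the two model germs shows the invariant takes opposite signs, so they lie in distinct $\A$-isotopy classes.

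For the realization I would follow the proof of Lemma \ref{lem:prenormal}. As $\{A\in O(2)\,|\,\det A>0\}=SO(2)$ is arc-wise connected, an orientation-preserving rotation brings $f$ to the prenormal form $(f_1,x_2)$ with null field $\eta=\partial_{x_1}$ and $\lambda=(f_1)_{x_1}$. For lips and beaks $\lambda$ then has a Morse critical point at $0$; applying the Malgrange preparation theorem and completing the cube in $x_1$ by orientation-preserving source and target changes reduces $f_1$ to $\ep\,x_1(x_1^2+x_2^2)$ or $\ep\,x_1(x_1^2-x_2^2)$, the inner sign being fixed by $\sign\det\hess\lambda$ (positive for lips, negative for beaks) and the outer sign being $\ep=\sign(f_1)_{x_1x_1x_1}(0)=\sign\eta\eta\lambda$. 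For the planar swallowtail the conditions $\eta\lambda=\eta\eta\lambda=0$, $\eta\eta\eta\lambda\ne0$ let one prepare $f_1=\ep\,x_1x_2+x_1^4$ by orientation-preserving changes, with $\ep=\sign(\xi\lambda\,\eta\eta\eta\lambda)$. Whenever a natural normalizing diffeomorphism turns out to be orientation-reversing, I would restore orientation by inserting a $\pi$-rotation in two coordinates as in Section \ref{sec:isotopy}; such a $\pi$-rotation leaves the sign invariant unchanged, so the residual sign $\ep$ is precisely the invariant computed in the first step.

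The step I expect to be the main obstacle is the invariance bookkeeping for the swallowtail: one must simultaneously control the scaling $c$ of the null direction under $d\sigma$, the positive factor $\rho$, and the orientation of $(\xi,\eta)$, and check that the odd power $c^3$ in $\eta\eta\eta\lambda$ is cancelled by the sign picked up by $\xi\lambda$, so that the product is genuinely well defined and invariant. The corresponding but easier point for lips and beaks is that $\eta\eta\lambda(0)$ is coordinate-free only because $d\lambda(0)=0$; and throughout the realization one must verify explicitly that every reducing diffeomorphism can be taken orientation-preserving, so as to conclude $\A$-isotopy rather than mere $\A$-equivalence.
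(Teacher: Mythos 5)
Your overall strategy---first establish the displayed sign as an invariant of $\A$-isotopy, then reduce $f$ to the stated normal form by orientation-preserving changes---is exactly the paper's, and your invariance half is in fact more complete than the paper's: the paper disposes of it in one sentence (``$\eta\eta\lambda$ does not change by positive coordinate changes'', and likewise for $\xi\lambda\,\eta\eta\eta\lambda$), whereas you spell out the transformation law, including the correct point that for the swallowtail the odd power $c^3$ coming from rescaling $\eta$ is compensated by the sign picked up by $\xi\lambda$, so only the product is invariant. That part stands.

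The genuine gap is in the realization step. You claim that Malgrange preparation plus ``completing the cube'' reduces $f_1$ to exactly $\ep x_1(x_1^2\pm x_2^2)$ (resp.\ $\ep x_1x_2+x_1^4$). It does not: preparation puts $f$ in the form $\big(\ep_0(\tilde x_1^{3}+u(\tilde x_1,\tilde x_2)\tilde x_1),\tilde x_2\big)$ with $u=b_1\circ f\circ\phi^{-1}$ merely a function satisfying $du(0)=0$, and no finite sequence of explicit changes (shears, scalings, completing the cube) turns $u$ into exactly $\pm\tilde x_2^{2}$; such operations only normalize the $3$-jet (resp.\ $4$-jet), leaving an infinite-order tail. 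Note also that, unlike in the proof of Lemma \ref{lem:prenormal}, you cannot promote $b_1\circ f\circ\phi^{-1}$ to a coordinate here, because for lips and beaks its differential vanishes at $0$---this is precisely where the stable (Morin) mechanism fails for these $\A_e$-codimension-one germs. The missing ingredient is finite determinacy: lips and beaks are $3$-determined and the planar swallowtail is $4$-determined \cite[Lemma 3.1.3]{rieger}, and one must either invoke the paper's jet-level definition of $\A$-isotopy (so that matching the $r$-jet by orientation-preserving germs suffices once $r$-determinacy is known), or argue, as the paper does via du Plessis and Mather, that the determinacy proofs remove the higher-order terms by diffeomorphisms isotopic to the identity, hence yield $\A$-isotopy and not merely $\A$-equivalence. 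Without this step your reduction stalls at ``normal form plus higher-order terms''. Incidentally, your closing paragraph flags the swallowtail sign bookkeeping as the main obstacle; in fact you handled that correctly, and the determinacy step is the one your argument actually lacks.
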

\begin{proof}
By the same method as in the proof of Lemma \ref{lem:prenormal},
we may assume $f$ has the form $f(x_1,x_2)=(f_1(x_1,x_2),x_2)$.
There exist functions $g_1(x_1,x_2)$ and
$g_2(x_2)$ such that
$f_1(x_1,x_2)=x_1g_1(x_1,x_2)+g_2(x_2)$.
Thus we may assume that
$f(x_1,x_2)=(x_1g_1(x_1,x_2),x_2)$ in all cases.\\
({\em Proofs of\/ {\rm (1)} and\/ {\rm (2)}}.)
Since the function $\lambda$ satisfies
that $\lambda(0)=0$ and $d\lambda(0)=0$,
$f$ can be written as
$$
\big(ax_1^2+bx_1x_2+cx_2^2+h(x_1,x_2),x_2\big)\quad(a,b,c\in\R),
$$
where $h(x_1,x_2)$ is a function which
order is greater than $3$.
Since $\eta\eta\lambda\ne0$, it holds that $c\ne0$.
Thus by an orientation-preserving diffeomorphism-germ
$\tilde x_1=x_1,$ $\tilde x_2=x_2+2bx_1/3c$ $(t\in[0,1])$,
and by a suitable scaling change,
$f$ is $\A$-isotopic to the map-germ
$(x_1,x_2^2,\pm x_2(x_1^2\pm x_2^2)+h(x_1,x_2))$,
where $h(x_1,x_2)$ is a function whose
order is greater than $3$.
It is well known that lips and beaks are
three-determined (\cite[Lemma 3.1.3]{rieger}),
and the proof of it contains that
$(x_1,x_2^2,\pm x_2(x_1^2\pm x_2^2))$
and
$(x_1,x_2^2,\pm x_2(x_1^2\pm x_2^2)+h(x_1,x_2))$
are $\A$-isotopic 
(see \cite[Section 3]{ondet}, see also \cite[Section 3]{mather3}).
Since $\eta\eta\lambda$ does not change by positive
coordinate changes on the source and target,
the second assertion of the theorem is obvious.\\
({\em Proof of\/ {\rm (3)}}.)
We can write 
$$
\begin{array}{l}
x_1g_1(x_1,x_2)
=
a_{20}x_1^2+a_{11}x_1x_2
+a_{30}x_1^3+a_{21}x_1^2x_2+a_{12}x_1x_2^2\\
\hspace{23mm}
+a_{40}x_1^4+a_{31}x_1^3x_2+a_{22}x_1^2x_2^2+a_{13}x_1x_2^3
+h(x_1,x_2)
\quad
(a_{**}\in\R),
\end{array}
$$
where $h(x_1,x_2)$ is a function whose
order is greater than $4$.
Since $\eta\lambda(0)=\eta\eta\lambda(0)=0$ and
$d\lambda(0)\ne0$, it holds that
$a_{20}=a_{30}=0$ and $a_{11}\ne0$,
thus by an orientation-preserving diffeomorphism-germ
$\tilde x_1=|a_{11}|x_1
+a_{21}x_1^2+a_{12}x_1x_2
a_{31}x_1^3+a_{22}x_1^2x_2+a_{13}x_1x_2^2
$
and by a suitable scaling change,
$f$ is $\A$-isotopic to the map-germ
$(x_1,\pm x_1x_2+x_2^4+h(x_1,x_2))$.
By the same argument as just above,
we see that $f$ is $\A$-isotopic to
$(x_1,\pm x_1x_2+x_2^4)$.
Since the sign of the product 
$\xi\lambda\eta\eta\eta\lambda$ 
does not depend on the choice of $(\xi,\eta)$,
the second assertion is obvious.
\end{proof}
\subsection{Whitney umbrellas and $S_1$-singularities}
Classification for map-germs from the plane into the $3$-space 
up to $\A$-equivalence is given by Mond \cite{mond}.
He classified simple map-germs $(\R^2,0)\to(\R^3,0)$.
Table \ref{tab:mond}
shows the list of the $\A_e$-codimension $\leq 1$
local singularities obtained in \cite{mond}.
\begin{table}[!htbp]
\centering
\begin{tabular}{|c|c|c|}
\hline
name&normal form&$\A_e$-codimension\\
\hline
Whitney umbrella&$(x_1^2,x_1x_2,x_2)$&0\\
\hline
$S_1^+$&$(x_1^2,x_1(x_1^2+ x_2^2),x_2)$&1\\
\hline
$S_1^-$&$(x_1^2,x_1(x_1^2- x_2^2),x_2)$&1\\
\hline
\end{tabular}
\caption{Classification of $C^\infty(2,3)$}
\label{tab:mond}
\end{table}
In the list, $S_1^\pm$ singularities are 
also called {\em Chen-Matumoto-Mond $\pm$-singularities\/} (\cite{cm}).
Recognition criteria for them
are given in \cite{sajisk}.
Let $f\in C^\infty(2,3)$ be a corank one map-germ at $0$ and
$\eta$ a non-zero vector field such that $\eta(0)\in \ker (df)(0)$.
Let $\xi$ be a vector field such that $\xi,\eta$ are
linearly independent.
We set
\begin{equation}\label{eq:s1hantei}
w=\det(\xi f,\ \eta f,\ \eta\eta f).
\end{equation}
Then $f$ is a Whitney umbrella if and only if $dw\ne0$ at $0$.
Furthermore, $f$ is an $S_1^+$ singularity
(respectively, $S_1^-$ singularity)
if and only if
$dw=0$ and $\det\hess w(0)>0$
(respectively, 
$dw=0$, $\det\hess w(0)<0$ and $\eta\eta w(0)\ne0$)
\cite[Theorem 2.2]{sajisk}.
For $\A$-isotopy, we have the following theorem:
\begin{theorem}
Let\/ $f\in C^\infty(2,3)$ be a corank one map-germ at\/ $0$.
\begin{enumerate}
\item[$(1)$] If\/ $f$ is\/ $\A$-equivalent to a Whitney umbrella
then\/ $f$ is\/ $\A$-isotopic to\/ $(x_1^2,x_1x_2,x_2)$.
\item[$(2)$] If\/ $f$ is\/ $\A$-equivalent to a\/ 
$S_1^+$ singularity,
and\/ $\sign\eta\eta w=\ep$,
then\/ $f$ is\/ $\A$-isotopic to\/
$\big(x_1^2,\ep x_1(x_1^2+x_2^2),x_2\big)$.
Moreover these two map-germs are not\/ $\A$-isotopic.
\item[$(3)$] If\/ $f$ is\/ $\A$-equivalent to a\/
$S_1^-$ singularity,
and\/ $\sign\eta\eta w=\ep$,
then\/ $f$ is\/ $\A$-isotopic to\/
$\big(x_1^2,\ep x_1(x_1^2-x_2^2),x_2\big)$.
Moreover these two map-germs are not\/ $\A$-isotopic.
\end{enumerate}
Here, $\ep=\pm1$.
\end{theorem}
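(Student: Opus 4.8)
The plan is to follow the same template as the two preceding proofs: first reduce $f$ to a prenormal form using only orientation-preserving diffeomorphism-germs (allowing $\pi$-rotations, which by the discussion in Section \ref{sec:isotopy} do not change the $\A$-isotopy class), then read off the leading coefficients from the recognition criteria of \cite{sajisk}, and finally absorb the higher-order terms by a finite-determinacy argument while checking that $\sign\eta\eta w$ is preserved. Since $f$ has corank one at $0$, a source rotation puts $\ker df(0)=\spann{\partial_{x_1}}$ and a target rotation puts $\image df(0)$ on the $X_3$-axis; both can be taken with positive determinant, hence orientation-preserving. Then $\partial f_3/\partial x_2(0)\ne0$, so the source diffeomorphism $(x_1,x_2)\mapsto(x_1,f_3(x_1,x_2))$ (orientation-preserving after a possible $\pi$-rotation) lets me assume $f_3=x_2$, and subtracting functions of $X_3$ in the target kills the traces $f_1(0,x_2),f_2(0,x_2)$. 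Thus I may assume
$$f(x_1,x_2)=\big(x_1g_1(x_1,x_2),\ x_1g_2(x_1,x_2),\ x_2\big),\qquad g_1(0)=g_2(0)=0.$$
Taking $\eta=\partial_{x_1}$ and $\xi=\partial_{x_2}$, expanding the determinant \eqref{eq:s1hantei} along the last row gives $w=\eta f_1\cdot\eta\eta f_2-\eta\eta f_1\cdot\eta f_2$, which I will use to translate the criteria of \cite{sajisk} into conditions on the Taylor coefficients of $g_1,g_2$.

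For part (1), the Whitney umbrella is characterized by $dw(0)\ne0$, so $w$ is a submersion and there is no second-order sign to extract. I would rerun the reduction to Whitney's normal form $(x_1^2,x_1x_2,x_2)$ using only orientation-preserving maps, correcting any signs of leading coefficients by $\pi$-rotations, and appeal to the finite $\A$-determinacy of this stable germ to absorb the tail isotopically; this yields a single class. For parts (2) and (3) I would expand $g_1,g_2$ and use $dw(0)=0$, $\det\hess w(0)\gtrless0$, $\eta\eta w(0)\ne0$ to pin the $3$-jet, then, exactly as in the plane-into-plane case, apply an orientation-preserving source diffeomorphism together with a scaling to reach $\big(x_1^2,\ep x_1(x_1^2\pm x_2^2),x_2\big)$ with $\ep=\sign\eta\eta w$. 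The higher-order terms are absorbed by the $\A$-isotopy furnished by the determinacy of $S_1^\pm$, as in the earlier arguments (cf. \cite{ondet,mather3}).

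It remains to see that $\sign\eta\eta w$ is an $\A$-isotopy invariant, which simultaneously gives the non-isotopy statements in (2) and (3). Under orientation-preserving changes of coordinates in source and target, $w=\det(\xi f,\eta f,\eta\eta f)$ is multiplied by a positive function, arising from the two Jacobian determinants and from the positive change of the frame $(\xi,\eta)$, while reversing $\eta$ changes $w$ only by an even power of $-1$. Since $dw(0)=0$, the sign of the second-order quantity $\eta\eta w(0)$ is therefore intrinsic and constant along any isotopy, so the two normal forms in (2), and likewise those in (3), are not $\A$-isotopic.

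I expect the main obstacle to be precisely this transformation law for $w$: unlike the Jacobian $\lambda$ of the equidimensional case, $w$ is assembled from a $3\times3$ determinant involving the auxiliary frame $(\xi,\eta)$ together with second derivatives of $f$, so verifying that it is multiplied by a positive function—and that the sign of $\eta\eta w$ at a point where $dw=0$ is independent of the choices made—requires a careful, if routine, computation, analogous to the remark that ``$\eta\eta\lambda$ does not change by positive coordinate changes'' used for the plane-into-plane case.
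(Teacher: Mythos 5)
Your constructive half is viable and runs parallel to the paper's argument: the paper reduces further than you do, to the prenormal form $(x_1^2,\,x_1h(x_1^2,x_2),\,x_2)$ of \cite{sajisk}, where $w=8x_1^2h_{x_1}+8x_1^4h_{x_1x_1}-2h$ becomes explicit, and then absorbs \emph{all} higher-order terms exactly, writing $h=\alpha x_1^2\big(1+\tilde h\big)+\beta x_2^2\big(1+\bar h\big)$ and substituting $(x_1,x_2)\mapsto\big(x_1\sqrt{1+\tilde h},\,x_2\sqrt{1+\bar h}\big)$, so no determinacy argument is needed; your alternative of pinning the $3$-jet in the weaker form $(x_1g_1,x_1g_2,x_2)$ and quoting $3$-determinacy of $S_1^{\pm}$ (with the du Plessis--Mather proofs supplying honest isotopies) is the same technique the paper uses for lips and beaks, and is acceptable.

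The genuine gap is in your invariance step, on which both ``Moreover'' assertions rest: the two transformation laws you propose to verify are false. First, $w$ is \emph{not} multiplied by a positive function under orientation-preserving target changes: for the Whitney umbrella $f=(x_1^2,x_1x_2,x_2)$ one has $w=-2x_2$, but after $\Phi(X)=(X_1,X_2+X_1^2,X_3)$ one computes $w'=16x_1^3-2x_2$, whose zero set is already different. What is true is weaker: since $\eta f(0)=0$, the error $\det\big(d\Phi\,\xi f,\ d\Phi\,\eta f,\ H\big)$, with $H$ quadratic in $\eta f$, vanishes to third order at $0$, so only $dw(0)$ and, \emph{when} $dw(0)=0$, the second-order data $\eta\eta w(0)$ and $\det\hess w(0)$ scale by $\det d\Phi>0$. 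Second, and fatally for your non-isotopy argument, reversing $\eta$ does not change $w$ by an even power of $-1$: $w=\det(\xi f,\eta f,\eta\eta f)$ contains \emph{three} $\eta$'s, so $\eta\mapsto-\eta$ gives $w\mapsto-w$ and hence $\eta\eta w(0)\mapsto-\eta\eta w(0)$. Concretely, for $(x_1^2,x_1(x_1^2+x_2^2),x_2)$ the admissible frames $(\xi,\eta)=(\partial_{x_2},\partial_{x_1})$ and $(\partial_{x_2},-\partial_{x_1})$ give $\eta\eta w(0)=+12$ and $-12$. Thus $\sign\eta\eta w$ is ill defined until one fixes the orientation of the frame (the paper does this silently by taking $\eta=\partial_{x_1}$, $\xi=\partial_{x_2}$); the invariant flips with the frame orientation, and it becomes an $\A$-isotopy invariant precisely because orientation-preserving source diffeomorphisms carry positive frames to positive frames, while the orientation-reversing equivalence $x_1\mapsto-x_1$, which interchanges the two normal forms in (2) (and in (3)), is excluded. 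There is also a correction you did not anticipate under changes of the admissible frame itself: for $\tilde\eta=a\eta+b\xi$, $\tilde\xi=c\xi+e\eta$ with $b(0)=0$ one finds $\tilde w=(ca-eb)\big(a^2w+2ab\,u+b^2v\big)$, $u=\det(\xi f,\eta f,\xi\eta f)$, $v=\det(\xi f,\eta f,\xi\xi f)$, hence $\tilde\eta\tilde\eta\tilde w(0)=c(0)a(0)^5\,\eta\eta w(0)+4c(0)a(0)^4\,\eta b(0)\,\eta u(0)$; the extra term dies only because $\eta u(0)=-\xi w(0)$ vanishes at an $S_1^{\pm}$ point where $dw(0)=0$. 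So the ``routine computation'' you defer is exactly where the content lies, and carried out for the statements as you formulated them, it would fail.
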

\begin{proof}
Let $f\in C^\infty(2,3)$ be a corank one map-germ at $0$.
Then one can easily see that $f$ is $\A$-isotopic to
the map-germ of the form
$(x_1^2,x_1h(x_1^2,x_2),x_2)$
for some function $h$ satisfying 
$h(0)=0$ (see \cite[p72]{sajisk}, for example).
We may choose $\xi=\partial x_2$ and 
$\eta=\partial x_1$.
Then the function $w$ defined in \eqref{eq:s1hantei}
is
$$
w(x_1,x_2)=
8x_1^2h_{x_1}(x_1^2,x_2)+8x_1^4h_{x_1x_1}(x_1^2,x_2)
-2h(x_1^2,x_2).
$$
Thus we have
$$
\xi w(0)=-2h_{x_2}(0),\ 
\det\hess w(0)=-24h_{x_2x_2}(0)h_{x_1}(0),\ 
\eta\eta w=12h_{x_1}(0).
$$
Hence  (1) is obvious.
We prove $(2)$ and $(3)$.
We assume that $dw_{(0)}=0$,
$\det\hess w(0)$ $\ne0$ and $\eta\eta w(0)\ne0$.
Since $h(0)=h_{x_2}(0)=0$,
$h_{x_2x_2}(0)\ne0$ and $h_{x_1}(0)\ne0$,
there exist functions $\bar h$ and $\tilde h$ 
satisfying $\bar h(0)=\tilde h(0)=0$ such that
$$
h(x_1,x_2)=
\alpha x_1^2\big(1+\tilde h(x_1^2,x_2)\big)
+
\beta x_2^2\big(1+\bar h(x_2)\big).
$$
We remark that $\det\hess w(0)=-48\alpha\beta$.
Thus by a coordinate change 
$$
(x_1,x_2)
\mapsto
\left(x_1\sqrt{1+\tilde h(x_2)},\ 
x_2\sqrt{1+\bar h(x_1^2,x_2)}\right)
$$
on the source, and a suitable scale change,
we see the first assertions of (2) and (3).
The second assertions are obvious.
\end{proof}

\section{Perturbation of simple singularities}\label{sec:perturb}
Consider a simple corank $1$ 
singularity $f\in C^\infty(n,n)$
and a small stable perturbation $\tilde f$ of $f$.
Since all stable corank $1$ singularities are Morin 
singularities, $\tilde f$ has some $n$-Morin singularities.
In the complex case, the number of
$n$-Morin singularities appearing in $\tilde f$
is constant, but in the real case,
it is not constant and
the maximal number has been studied \cite{fi,fns}.
It is denoted by $c(f)$,
and it represents a geometric property of $f$.
In the present paper, we divide $\A$-classes into
$\A$-isotopy classes, and
we can study $c(f)$ more precisely using $\A$-isotopy.
In this section, we observe the $\A$-isotopy classes 
of $n$-Morin singularities appearing on some perturbations of
simple singularities.
Since the numbers of $\A$-isotopy classes 
of $n$-Morin singularities has a periodicity $4$ with respect to $n$,
we consider the cases of $n=2,3,4$ and $5$ here.
For the sake of simplicity, in the case of $n=4$,
we call 
the invariant $\eta^4\lambda$ the {\em first invariant},
and
the invariant $\det \grad(\lambda,\ldots,\eta^3\lambda)$ 
the {\em second invariant}.

\subsection{Classification of simple corank $1$ map-germs of $C^\infty(n,n)$}
Let $f$ be a corank $1$ map-germ of $C^\infty(n,n)$.
Then $f$ is $\A$-equivalent to the map-germ 
$$
(t,x_2,\ldots,x_{n})\mapsto
\big(f_1(t,x_2,\ldots,x_{n}),x_2,\ldots,x_{n}\big).
$$
The function $f_1(t,0,\ldots,0)$ is called
the {\em genotype\/} of $f$.
If $f$ is simple and $n\geq3$, then the genotype of $f$
is $t^{i+1}$ ($i\leq n+1$).
If $f$ is simple and $n=2$, then the genotype of $f$
is $t^{i+1}$ ($i\leq 4$).
Thus, if $f$ is simple, one can show that $f$ is 
$\A$-equivalent to
\begin{equation}\label{eq:normalform}
(t,x_2,\ldots,x_{n})\mapsto
\left(
t^{i+1}+\sum_{j=1}^{i-1}p_j(x_2,\ldots,x_{n})t^j,\ 
x_2,\ \ldots,\ x_{n}
\right).
\end{equation}
We denote the map-germ of the form \eqref{eq:normalform}
by $[p_1,\ldots,p_{i-1}]$.

Classification of simple map-germs of $C^\infty(2,2)$
is given by Rieger \cite{rieger}, and 
classification of simple map-germs of $C^\infty(3,3)$
is given by Marar and Tari \cite{mt}
using the method of complete transversal (\cite{bkdp}).
Using the same method as in \cite{mt}, we can find three families of 
simple map-germs as in Table \ref{tab:germs}.
It should be remarked that
these families can be also obtained
by the {\em augmentation\/} of map-germs \cite{cmw,h1,h2},
because $[x_2,\ldots,x_{n}]$ is a stable map-germ.
\begin{table}[htbp]
\centering
\label{tab:germs}
\begin{tabular}{|c|c|c|c|c|c|}
\hline
 &$n$&2&3&4&5\\
\hline
\hline
Family&genotype&$t^3$&$t^4$&$t^5$&$t^6$\\
A &map-germ&$[x_2^l]$&$[x_2,x_3^l]$&$[x_2,x_3,x_4^l]$&
$[x_2,x_3,x_4,x_5^l]$\\
\hline
Family&genotype&$t^4$&$t^5$&$t^6$&$t^7$\\
B&map-germ&$[x_2,0]$&$[x_2,x_3,0]$&$[x_2,x_3,x_4,0]$&
$[x_2,x_3,x_4,x_5,0]$\\
\hline
Family&genotype&$t^4$&$t^5$&$t^6$&$t^7$\\
C&map-germ&$[x_2^2,x_2]$&$[x_2,x_3^2,x_3]$&$[x_2,x_3,x_4^2,x_4]$&
$[x_2,x_3,x_4,x_5^2,x_5]$\\
\hline
\end{tabular}
\caption{Families of map-germs ($l\geq2$).}
\end{table}

Observing the invariants
of $n$-Morin singularities $\tilde f$ for the
families A, B and C,
we may clarify the difference between
these families with respect to $n$.

\subsection{Family A}
A versal unfolding of a map-germ in family A
is
$F_u(t,x)
=
\big(q(t,x,u),x_2,\ldots,x_{n}\big),$
$$
\begin{array}{rcl}
q(t,x,u)
&=&
\left\{
\begin{array}{lll}
t^3+\bar q(x_2,u)t,
&(n=2),\\
t^4+x_2t+\bar q(x_3,u)t^2,
&(n=3),\\
t^5+x_2t+x_3t^2+\bar q(x_4,u)t^3,
&(n=4),\\
t^6+x_2t+x_3t^2+x_4t^3+\bar q(x_5,u)t^4,&(n=5),
\end{array}
\right.\\[9mm]
\bar q(x_n,u)&=&x_n^l+u_0+u_1x_n+\cdots+u_{l-2}x_n^{l-2},
\end{array}
$$
where 
$x=(x_2,\ldots,x_{n})$ and $u=(u_0,\ldots,u_{l-2})\in\R^{l-1}$.
For versal unfolding, see \cite[Chapter XIV]{martinet}, for example.
Since $\lambda=q_t(t,x,u)$ and $\eta=\partial t$,
we have Table \ref{tab:fama},
where ``$n$-M'' means the condition for $n$-Morin singularity,
and ``inv'' means the value of the invariants mentioned in 
Lemma \ref{lem:sgninv}.
Looking at Table \ref{tab:fama},
we observe
in the case $n=2$, where
all $n$-Morin singularities are $\A$-isotopic,
and in
the cases $n=3$ and $5$,
there are two kinds of $n$-Morin singularities,
and in the case $n=4$,
there are two kinds of $n$-Morin singularities,
but in all cases, the first and second invariants coincide.
\begin{table}[!htbp]
\centering
\label{tab:fama}
\begin{tabular}{|c|c|c|c|c|}
\hline
$n$&\hspace{8mm}2\hspace*{8mm}&\hspace{8mm}3\hspace*{8mm}&
\hspace{8mm}4\hspace*{8mm}&\hspace{8mm}5\hspace*{8mm}\\
\hline
$c(f)$ &$l$&$l$&$l$&$l$\\
\hline
$n$-M &$t=0$, $\bar q=0$, $\bar q_{x_2}\ne0$&
\multicolumn{3}{|c|}{$t=x_2=\cdots =x_{n-1}=0,\ \bar q=0,\ 
\bar q_{x_{n}}\ne0$}\\
\hline
inv&1&$\bar q_{x_3}$&$(1,\bar q_{x_4})$&$\bar q_{x_5}$\\
\hline
\end{tabular}
\caption{Maximal number of Morin singularities and their invariants
(family A)}
\end{table}
\subsection{Family B}
A versal unfolding of a map-germ in family B
is
$F_u(t,x)
=
\big(q(t,x,u_0),x_2,\ldots,x_{n}\big)
$,
$$
q(t,x,u_0)
=
\left\{
\begin{array}{ll}
t^4+x_2t+u_0t^2&(n=2),\\
t^5+x_2t+x_3t^2+u_0t^3&(n=3),\\
t^6+x_2t+x_3t^2+x_4t^3+u_0t^4&(n=4),\\
t^7+x_2t+x_3t^2+x_4t^3+x_5t^4+u_0t^5&(n=5),
\end{array}
\right.
$$
where $u_0\in\R$.
Since $\lambda=q_t(t,x,u_0)$ and $\eta=\partial t$,
we have Table \ref{tab:famb}.
Looking at Table \ref{tab:famb},
we observe that in
the case $n=2$,
all $n$-Morin singularities are $\A$-isotopic,
and in
the cases $n=3$ and $5$,
there are two kinds of $n$-Morin singularities,
and in the case $n=4$,
there are two kinds of $n$-Morin singularities,
but in all cases, the first and second invariants coincide.
\begin{table}[!htbp]
\centering
\label{tab:famb}
\begin{tabular}{|c|c|c|c|c|}
\hline
$n$&2&3&4&5\\
\hline
$c(f)$ &$2$&$2$&$2$&$2$\\
\hline
$n$-M
&
{\small $\begin{array}{l}t=\pm\sqrt{-\dfrac{u_0}{6}},\\
                 x_2=8t^3,\\
                 t\ne0
\end{array}$}
&
{\small $\begin{array}{l}t=\pm\sqrt{-\dfrac{u_0}{10}},\\
                 x_2=105t^4,\\
                 x_3=-40t^3,\\
                 t\ne0
\end{array}$}
&
{\small $\begin{array}{l}t=\pm\sqrt{-\dfrac{u_0}{15}},\\
                 x_2=24t^5,\\
                 x_3=-45t^4,\\
                 x_4=40t^3,\\
                 t\ne0
\end{array}$}
&
{\small $\begin{array}{l}t=\pm\sqrt{-u_0/21},\\
                 x_2=-35t^6,\\
                 x_3=84t^5,\\
                 x_4=-105t^4,\\
                 x_5=70t^3,\\
                 t\ne0
\end{array}$}\\
\hline
inv&$t$&$t^2$&$(t,t)$&$t$\\
\hline
\end{tabular}
\caption{Maximal number of Morin singularities and their invariants
(family B)}
\end{table}

\subsection{Family C}
A versal unfolding of a map-germ in family C
is
$F_u(t,x)
=
\big(q(t,x,u),x_2,\ldots,x_{n}\big)
$,
$$
q(t,x,u)
=
\left\{
\begin{array}{ll}
t^4+(x_2^2+u_0+u_1x_2)t+x_2t^2&(n=2),\\
t^5+x_2t+(x_3^2+u_0+u_1x_3)t^2+x_3t^3&(n=3),\\
t^6+x_2t+x_3t^2+(x_4^2+u_0+u_1x_4)t^3+x_4t^4&(n=4),\\
t^7+x_2t+x_3t^2+x_4t^3+(x_5^2+u_0+u_1x_5)t^4+x_5t^5&(n=5),
\end{array}
\right.
$$
where $u=(u_0,u_1)\in\R^2$.
Since $\lambda=q_t(t,x,u_0,u_1)$ and $\eta=\partial t$,
we have Table \ref{tab:famc}.
Here, 
equations $C_2$ stands for the equations
$36t^4-8t^3-6u_1t^2+u_0=0, x_2=-6t^2,t\ne0$,
equations $C_3$ stands for the equations
$100t^4-20t^3-10u_1t^2+u_0=0,x_2=25t^4-200t^5+20u_1t^3-2u_0t,
x_3=-10t^2, t\ne0$,
equations $C_4$ stands for the equations
$255t^4-40t^3-15t^2u_1+u_0=0,
x_2=3(225t^6-32t^5-15t^4u_1+u_0t^2),
x_3=-3(225t^5-25t^4-15t^3u_1+tu_0),
x_4=-15t^2,
t\ne0$
and
equations $C_5$ stands for the equations
$441t^4-70t^3-21t^2u_1+u_0=0,
x_2=-4u_0t^3+84u_1t^5+245t^6-1764t^7,
x_3=2640t^6-336t^5-126u_1t^4+6u_0t^2,
x_4=-1764t^5+175t^4+84u_1t^3-4u_0t,
x_5=-21t^2,
t\ne0$.
%Looking at Table \ref{tab:famc},
%we observe that the invariants of the
%four $n$-Morin singularities 
%can take both positive and negative values,
%different from the family B.
%For example, 
%if $n=3$, and $(u_0,u_1)=(1/100,1)$
%then the sign of the invariants 
%of the four $3$-Morin singularities appearing
%on $F_u(x)$ are $(-,-,+,+)$.

\begin{table}[!htbp]
\centering
\label{tab:famc}
\begin{tabular}{|c|c|c|c|c|}
\hline
$n$&2&3&4&5\\
\hline
$c(f)$ &$4$&$4$&$4$&$4$\\
\hline
$n$-M
&equations
&equations
&equations
&equations\\
&$C_2$
&$C_3$
&$C_4$
&$C_5$\\
\hline
inv&$t$&$-20 t^2+3 t+u_1$&$\big(t,t(30t^2-4t-u_1)\big)$
&$t(-42t^2+5t+u_1)$\\
\hline
\end{tabular}
\caption{Maximal number of Morin singularities and their invariants
(family C)}
\end{table}
\section{Criteria for $\Sigma^{2,0}$ singularities and isotopy}

In this section, we consider a corank two singularity
for $C^\infty(4,4)$.
$1$-Morin, $2$-Morin, $3$-Morin singularities and
$\Sigma^{2,0}$-singularities are stable (equivalently, gene\-ric) singularities for
maps from $4$-manifolds to $4$-manifolds.
Let $f\in C^\infty(4,4)$ be a 
stable map-germ such that the origin is a singular point of $f$.
Then $f$ is $\A$-equivalent to
$1$-Morin, 
$2$-Morin, 
$3$-Morin singularity or the following map-germ:
$$
\begin{array}{rl}
\Sigma^{2,0}_{{\rm hyp}}&:(x_1,x_2,x_3,x_4)\mapsto
(x_1^2+x_2x_3,x_2^2+x_1x_4,x_3,x_4)\\[2mm]
\Sigma^{2,0}_{{\rm elli}}&:(x_1,x_2,x_3,x_4)\mapsto
(x_1^2-x_2^2+x_1x_3+x_2x_4,x_1x_2+x_1x_4-x_2x_3,
x_3,x_4).
\end{array}
$$
The germ $\Sigma^{2,0}_{{\rm hyp}}$ 
(respectively $\Sigma^{2,0}_{{\rm elli}}$)
is also called
the {\em hyperbolic umbilic}
(respectively, the {\em elliptic umbilic}),
and $I_{2,2}^+$ (respectively, $I_{2,2}^-$) \cite{GG,mather6}.
Moreover, we define ``signed'' umbilics as follows:
$$
\begin{array}{rl}
\Sigma^{2,0}_{{\rm hyp},\ep_1}&:(x_1,x_2,x_3,x_4)\mapsto
(x_1^2+x_2x_3,x_2^2+\ep_1x_1x_4,x_3,x_4)\\[2mm]
\Sigma^{2,0}_{{\rm elli},\ep_1,\ep_2}&:(x_1,x_2,x_3,x_4)\mapsto
(x_1^2-x_2^2+\ep_1x_1x_3+x_2x_4,\\
&\hspace{40mm}\ep_1x_1x_2+\ep_1x_1x_4-x_2x_3,
x_3,\ep_2x_4),
\end{array}
$$
where $\ep_1=\pm1$ and $\ep_2=\pm1$.
Then we have the following theorem.
\begin{theorem}\label{thm:sigma20cris}
Let\/ $f\in C^\infty(4,4)$ be a map-germ such that\/
$\rank (df)(0)=2$ holds.
Then\/ $f$ is\/ $\A$-isotopic to\/ $\Sigma^{2,0}_{{\rm hyp},\ep_1}$
$($respectively, $\Sigma^{2,0}_{{\rm elli},\ep_1,\ep_2})$
if and only if
for a coordinate system\/ $(X_1,X_2,X_3,X_4)$ on the target
satisfying that\/
$d(X_1\circ f)(0)=d(X_2\circ f)(0)={0}$
and a pair of vector fields\/ $(\xi,\eta)$ on the source satisfying that\/
$\spann{\xi(0),\eta(0)}=\ker (df)(0)$,
it holds that
\begin{itemize}
\item[$(1)$] $\det\hess_{(\xi,\eta)}\lambda(\zv)<0$ 
$($respectively, $\det\hess_{(\xi,\eta)}\lambda(\zv)>0)$,
\item[$(2)$] $\sign\det\big(\grad(\xi f_1),\ \grad(\xi f_2),\ 
\grad(\eta f_1),\ \grad(\eta f_2)\big)=-\ep_1$
holds\\
$($respectively, 
$\sign\det\big(\grad(\xi f_1),\ \grad(\xi f_2),\ 
\grad(\eta f_1),\ \grad(\eta f_2)\big)=\ep_1$ holds,
and the sign of\/ $\trace\hess_{(\xi,\eta)}\lambda(\zv)$ 
is equal to\/ $\ep_1\ep_2)$.
\end{itemize}
\end{theorem}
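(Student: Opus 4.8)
My plan is to reduce $f$ to one of the signed normal forms $\Sigma^{2,0}_{\mathrm{hyp},\ep_1}$ or $\Sigma^{2,0}_{\mathrm{elli},\ep_1,\ep_2}$ by orientation-preserving coordinate changes, exactly in the spirit of the proof of Lemma~\ref{lem:prenormal}, and then to check that the three quantities in (1) and (2) are invariants of $\A$-isotopy that separate the signed classes. Since $\rank(df)(0)=2$, after an orientation-preserving rotation in the source and target I may assume $d(X_1\circ f)(0)=d(X_2\circ f)(0)=0$ and that $f$ has the form $(f_1,f_2,x_3,x_4)$ with $f_1,f_2$ having vanishing linear parts; the kernel of $df(0)$ is then spanned by $\partial_{x_1},\partial_{x_2}$, and I may take $(\xi,\eta)=(\partial_{x_1},\partial_{x_2})$ as the canonical positive frame. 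The quadratic parts of $f_1,f_2$ in $(x_1,x_2)$ define a pair of binary quadratic forms, and the stability/genericity of the $\Sigma^{2,0}$ point guarantees this pair is nondegenerate, so it is $GL$-equivalent to either the hyperbolic or elliptic model.

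\textbf{Key steps in order.}
First I would verify that the three scalars are genuine $\A$-isotopy invariants. The point is that $\lambda=\det(df)$ is multiplied by a positive function under orientation-preserving changes on source and target, so the \emph{signs} of $\det\hess_{(\xi,\eta)}\lambda$ and of $\trace\hess_{(\xi,\eta)}\lambda$ (evaluated at the singular point, where $d\lambda$ vanishes on the kernel) are well defined up to positive scaling and orientation-preserving frame changes on $\ker(df)(0)$; a short computation shows the sign of the discriminant of the Hessian restricted to the kernel is frame-independent, and the trace sign is meaningful once the discriminant sign is fixed. Likewise the sign of $\det(\grad(\xi f_1),\grad(\xi f_2),\grad(\eta f_1),\grad(\eta f_2))$ transforms by a positive factor times the Jacobian of the target change, hence its sign is an orientation-preserving invariant. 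Second, I would compute these three signs directly on the signed normal forms $\Sigma^{2,0}_{\mathrm{hyp},\ep_1}$ and $\Sigma^{2,0}_{\mathrm{elli},\ep_1,\ep_2}$ to read off the dictionary: the sign of $\det\hess_{(\xi,\eta)}\lambda$ distinguishes hyperbolic ($<0$) from elliptic ($>0$), the $4\times4$ determinant in (2) recovers $\ep_1$, and in the elliptic case the trace sign recovers $\ep_1\ep_2$. Third, for the converse I would run the normalization: bring the quadratic parts to the hyperbolic or elliptic model by an orientation-preserving linear change (using that $SO$ and the identity component of the relevant linear group are arc-wise connected, so all such reductions are realized by isotopies), then kill higher-order terms by a Malgrange-preparation argument parallel to equations \eqref{eq:mal1}--\eqref{eq:f1}, always tracking orientations so that every diffeomorphism used lies in the identity component.

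\textbf{Main obstacle.}
The delicate point is the frame-independence and the correct sign bookkeeping in step one, particularly in the elliptic case. When $\det\hess_{(\xi,\eta)}\lambda>0$ the Hessian is definite, so its trace sign is a real invariant, but one must confirm that changing the positive frame $(\xi,\eta)$ in $\ker(df)(0)$ — i.e.\ acting by $GL^+(2,\R)$ on the kernel — leaves the sign of $\trace\hess_{(\xi,\eta)}\lambda$ unchanged only after the factor $\ep_1$ from the determinant in (2) has been accounted for, which is exactly why the stated invariant is the \emph{product} $\ep_1\ep_2$ rather than $\ep_2$ alone. I expect the hyperbolic case to be routine, but in the elliptic case the interaction between the two signs — the discriminant/trace pair from $\lambda$ and the orientation determinant from the $\grad(\xi f_i),\grad(\eta f_i)$ — is where the computation must be done carefully; reducing everything to the explicit signed models and comparing coefficients is the surest way to pin down the signs.
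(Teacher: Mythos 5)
Your proposal is correct and takes essentially the same route as the paper: the paper likewise first proves that conditions (1) and (2) are independent of the choice of frame on $\ker (df)(0)$ and of the target coordinates (the Hessian changes by congruence $A\,\hess_{(\xi,\eta)}\lambda\,\trans{A}$ and the four-gradient determinant by the square factors $(a_{11}a_{22}-a_{12}a_{21})^2$ and $(\det J\Phi)^2$), and then reduces to the signed normal forms by an orientation-tracked version of the classification of stable $\Sigma^{2,0}$-germs, exactly as you outline. The only presentational difference is that the paper identifies the hyperbolic versus elliptic type through the local algebra $Q(f)=\E_2/\spann{x_1^2,x_2^2}_{\E_2}$ versus $\E_2/\spann{x_1^2-x_2^2,x_1x_2}_{\E_2}$ rather than through the $GL$-orbit of the pair of quadratic forms, and it delegates the final preparation-theorem normalization to Golubitsky--Guillemin (pp.\ 183--186) instead of redoing it by hand as you propose.
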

Here, 
$f_i=X_i\circ f$,
and $\xi f_i$ denotes the 
directional derivative of $f_i$
with respect to $\xi$,
and
$\lambda$ is the determinant of the Jacobi matrix of $f$
and $\hess_{(\xi,\eta)}\lambda$ is the Hessian matrix of $\lambda$
with respect to $\xi$ and $\eta$:
$$
\hess_{(\xi,\eta)}\lambda
=
\pmt{
\xi\xi\lambda&\xi\eta\lambda\\
\eta\xi\lambda&\eta\eta\lambda}.
$$
We remark that since $\xi(0)$ and $\eta(0)$ 
belong to the kernel of $(df)(0)$, it holds that
$\hess_{(\xi,\eta)}\lambda$ is a symmetric matrix.
Since $\rank (df)(0)=2$, it holds 
that $\lambda$ has a critical point at $0$.
The proof of this theorem is given as follows.

\begin{lemma}\label{lem:sigma201}
Conditions\/ $(1)$ and\/ $(2)$ of Theorem\/ $\ref{thm:sigma20cris}$
do not depend on the choice of vector fields spanning
$\ker (df)(0)$ at $0$.
\end{lemma}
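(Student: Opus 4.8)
The plan is to exploit the fact, noted just before the statement, that $\lambda$ has a critical point at $0$, together with the hypotheses $d(X_1\circ f)(\zv)=d(X_2\circ f)(\zv)=\zv$ on the target coordinates; throughout, the coordinate system is held fixed and only the vector fields vary. First I would record that because $d\lambda(\zv)=0$ and $df_1(\zv)=df_2(\zv)=0$, the entries of $\hess_{(\xi,\eta)}\lambda(\zv)$ and the vectors $\grad(\xi f_i)(\zv)$, $\grad(\eta f_i)(\zv)$ depend only on the \emph{values} $\xi(\zv),\eta(\zv)$, not on the $1$-jets of the vector fields. Indeed, for any function $g$ with $dg(\zv)=0$ one computes $\eta(\xi g)(\zv)=\sum_{i,j}\xi_i(\zv)\eta_j(\zv)\,g_{x_ix_j}(\zv)$, the terms carrying derivatives of $\xi$ dropping out precisely because $dg(\zv)=0$. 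Applying this with $g=\lambda$ shows that $\hess_{(\xi,\eta)}\lambda(\zv)$ is the matrix, in the basis $(\xi(\zv),\eta(\zv))$, of the intrinsic Hessian form of $\lambda$ restricted to $\ker(df)(\zv)$; applying it with $g=f_i$ shows $\grad(\xi f_i)(\zv)=\hess f_i(\zv)\,\xi(\zv)$, and likewise for $\eta$.

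Next I would describe how each quantity transforms under a change of spanning vector fields. If $(\tilde\xi,\tilde\eta)$ is another pair with $\spann{\tilde\xi(\zv),\tilde\eta(\zv)}=\ker(df)(\zv)$, then $(\tilde\xi(\zv),\tilde\eta(\zv))=(\xi(\zv),\eta(\zv))\,P$ for some $P\in GL(2,\R)$. For condition $(1)$ the Hessian matrix transforms by congruence, $\hess_{(\tilde\xi,\tilde\eta)}\lambda(\zv)=\trans{P}\,\hess_{(\xi,\eta)}\lambda(\zv)\,P$, so its determinant is multiplied by $(\det P)^2>0$ and its sign is unchanged. For the determinant in condition $(2)$, writing $v_1=\grad(\xi f_1)(\zv)$, $v_2=\grad(\xi f_2)(\zv)$, $v_3=\grad(\eta f_1)(\zv)$, $v_4=\grad(\eta f_2)(\zv)$ and using the formulas above, the new quadruple is obtained from $(v_1,v_2,v_3,v_4)$ by right multiplication by a $4\times4$ matrix which, after reordering the columns as $(1,3,2,4)$ on source and target alike, becomes block diagonal with two copies of $P$; the permutation signs cancel and the determinant is again multiplied by $(\det P)^2>0$, so its sign is unchanged.

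The one quantity that is not governed by a squared-determinant factor is the trace in the elliptic half of condition $(2)$, and this is the step I expect to be the main obstacle: $\trace$ is genuinely not a congruence invariant. Here I would instead use that in the elliptic case condition $(1)$ gives $\det\hess_{(\xi,\eta)}\lambda(\zv)>0$, so the symmetric $2\times2$ matrix $\hess_{(\xi,\eta)}\lambda(\zv)$ is definite and its two eigenvalues share a common sign, whence $\sign\trace\hess_{(\xi,\eta)}\lambda(\zv)$ equals exactly that common sign. By Sylvester's law of inertia the congruence $\trans{P}(\cdot)\,P$ preserves definiteness together with its sign, so the common sign—and therefore $\sign\trace\hess_{(\xi,\eta)}\lambda(\zv)$—is independent of the choice of spanning vector fields. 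Combining the three observations yields the lemma; the crux is recognizing that the trace is tested only for its sign and only under the definiteness guaranteed by $\det\hess>0$, so that the a priori non-invariant trace becomes invariant after all.
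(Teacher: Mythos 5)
Your proof is correct, and its core is the same as the paper's: both arguments reduce a change of spanning fields to a $2\times 2$ matrix $P$ acting at the origin, obtain the congruence $\hess_{(\tilde\xi,\tilde\eta)}\lambda(\zv)=\trans{P}\,\hess_{(\xi,\eta)}\lambda(\zv)\,P$ for condition (1), and show that the $4\times 4$ determinant in condition (2) is multiplied by $(\det P)^2>0$ (the paper's factor $(a_{11}a_{22}-a_{12}a_{21})^2$, obtained from the same block structure you describe). You add two things the paper leaves tacit. First, you justify explicitly that only the values $\xi(\zv),\eta(\zv)$ matter and not the $1$-jets of the vector fields, because the derivative terms of the coefficients drop out when $d\lambda(\zv)=0$ and $df_1(\zv)=df_2(\zv)=0$; the paper works with a function-valued matrix $A$ and simply evaluates at $\zv$ without comment. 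Second, and more substantively, the paper never addresses the trace appearing in the elliptic half of condition (2): it proves the congruence and then asserts that ``the independency of the conditions for the Hessian matrix is proven,'' even though congruence alone does not preserve the sign of a trace. Your observation that condition (1) forces $\hess_{(\xi,\eta)}\lambda(\zv)$ to be definite in the elliptic case, so that Sylvester's law of inertia preserves the sign of definiteness and hence the sign of the trace, is exactly the argument needed to close this gap; it is the one genuinely nontrivial point of the lemma, and your write-up handles it while the paper's does not.
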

\begin{proof}
Let $\xi,\eta,\zeta,\omega$ be a quadruple of vector fields
of $(\R^4,0)$ such that $\xi,\eta,\zeta,\omega$
are linearly independent.
Let a function $\lambda:(\R^4,0)\to(\R,0)$ has 
a critical point at $0$.
We remark that $\xi\eta\lambda=\eta\xi\lambda$ holds
at $0$.
Let
$\bar\xi,\bar\eta,\bar\zeta,\bar\omega$ be another quadruple of
vector fields such that
$$
\trans{\pmt{\bar\xi,\bar\eta,\bar\zeta,\bar\omega}}
=
%\pmt{a_{11}&a_{12}&a_{13}&a_{14}\\
%     a_{21}&a_{22}&a_{23}&a_{24}\\
%     a_{31}&a_{32}&a_{33}&a_{34}\\
%     a_{41}&a_{42}&a_{43}&a_{44}}
A\ 
\trans{\pmt{\xi, \eta, \zeta, \omega}},
$$
where $A=(a_{ij})_{i,j=1,\ldots,4}$
and
$a_{13}=a_{14}=a_{23}=a_{24}=0$ hold at $0$.
Here, $\trans{(~)}$ means the transpose matrix.
Then it holds that
$$
\pmt{\bar\xi\bar\xi\lambda &\bar\xi\bar\eta\lambda\\
     \bar\eta\bar\xi\lambda&\bar\eta\bar\eta\lambda}
=
A
%\pmt{a_{11}&a_{12}&a_{13}&a_{14}\\
%     a_{21}&a_{22}&a_{23}&a_{24}\\
%     a_{31}&a_{32}&a_{33}&a_{34}\\
%     a_{41}&a_{42}&a_{43}&a_{44}}
\pmt{\xi\xi\lambda &\xi\eta\lambda\\
     \eta\xi\lambda&\eta\eta\lambda}
\trans{
A
%\pmt{a_{11}&a_{12}&a_{13}&a_{14}\\
%     a_{21}&a_{22}&a_{23}&a_{24}\\
%     a_{31}&a_{32}&a_{33}&a_{34}\\
%     a_{41}&a_{42}&a_{43}&a_{44}}
}
$$
at $0$. Thus the independency of the conditions for 
Hessian matrix is proven. Moreover,
$$
\begin{array}{rcl}
&&\big(
a_{11}d(\xi f_1)+a_{12}d(\eta f_1),\ 
a_{11}d(\xi f_2)+a_{12}d(\eta f_2),\\
&&\hspace{40mm}
a_{21}d(\xi f_1)+a_{22}d(\eta f_1),\ 
a_{21}d(\xi f_2)+a_{22}d(\eta f_2)
\big)\\
&=&
\big(d(\xi f_1),\ d(\xi f_2),\ d(\eta f_1),\ d(\eta f_2)\big)
\pmt{a_{11}&0&a_{21}&0\\
     0&a_{11}&0&a_{21}\\
     a_{12}&0&a_{22}&0\\
     0&a_{12}&0&a_{22}}
\end{array}
$$
holds at $0$, where $dh$ means $\grad h$, for the sake of simplicity.
We have
$$
\begin{array}{rcl}
&&\det \big(
a_{11}(d\xi f_1)+a_{12}d(\eta f_1),\ 
a_{11}(d\xi f_2)+a_{12}d(\eta f_2),\\
&&\hspace{30mm}
a_{21}(d\xi f_1)+a_{22}d(\eta f_1),\ 
a_{21}(d\xi f_2)+a_{22}d(\eta f_2)
\big)\\
&=&
(a_{11}a_{22}-a_{12}a_{21})^2
\det\big(d(\xi f_1),\ d(\xi f_2),\ d(\eta f_1),\ d(\eta f_2)\big).
\end{array}
$$
Thus the conditions (1) and (2) do not depend on
the choice of vector fields.
\end{proof}
\begin{lemma}\label{lem:sigma202}
Conditions\/ $(1)$ and\/ $(2)$ of Theorem\/ $\ref{thm:sigma20cris}$
do not depend on the choice of the coordinate system of the target.
\end{lemma}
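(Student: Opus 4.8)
The plan is to imitate the proof of Lemma~\ref{lem:sigma201}: relate the two target coordinate systems by the Jacobian of the transition diffeomorphism at the origin, extract the block structure forced by the normalization $d(X_1\circ f)(0)=d(X_2\circ f)(0)=0$, and then track the effect on each ingredient of conditions $(1)$ and $(2)$. First I would fix a second admissible system $(\bar X_1,\ldots,\bar X_4)$ and set $B=(b_{ij})$ with $b_{ij}=\partial\bar X_i/\partial X_j(0)$, the Jacobian at $0$ of the transition $X\mapsto\bar X$. The normalization says that $dX_1(0),dX_2(0)$ span the annihilator $W$ of $\image(df)(0)$, a $2$-plane in the cotangent space, and likewise for $d\bar X_1(0),d\bar X_2(0)$. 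Since $d\bar X_i(0)=\sum_j b_{ij}\,dX_j(0)$ and $dX_3(0),dX_4(0)$ are independent modulo $W$, membership of $d\bar X_1(0),d\bar X_2(0)$ in $W=\spann{dX_1(0),dX_2(0)}$ forces $b_{13}=b_{14}=b_{23}=b_{24}=0$. Thus $B$ is block lower triangular with diagonal blocks $B_{11}$ (indices $1,2$) and $B_{22}$ (indices $3,4$), and $\det B(0)=\det B_{11}\cdot\det B_{22}\ne0$.

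Next I would record the transformation laws. Writing $\bar\lambda$ for the Jacobian determinant computed in the new target coordinates, the chain rule gives $\bar\lambda=(\det B\circ f)\,\lambda=\mu\lambda$ with $\mu(0)=\det B(0)$. Because $0$ is a singular point at which $\lambda$ has a critical point, so that $\lambda(0)=0$ and $d\lambda(0)=0$, the product rule annihilates every cross term and leaves $\hess_{(\xi,\eta)}\bar\lambda(0)=\mu(0)\,\hess_{(\xi,\eta)}\lambda(0)$. Hence $\det\hess_{(\xi,\eta)}\bar\lambda(0)=\mu(0)^2\det\hess_{(\xi,\eta)}\lambda(0)$ and $\trace\hess_{(\xi,\eta)}\bar\lambda(0)=\mu(0)\,\trace\hess_{(\xi,\eta)}\lambda(0)$. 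The first identity settles condition~$(1)$, since the factor $\mu(0)^2$ is positive.

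For the gradient determinant the crux is that $\xi f_k(0)=\eta f_k(0)=0$ for every $k$, because $\xi(0),\eta(0)\in\ker(df)(0)$ and so $df(0)\xi(0)=df(0)\eta(0)=0$. Expanding $\xi\bar f_i=\sum_k\big((\partial_{X_k}\bar X_i)\circ f\big)\,\xi f_k$ and differentiating, the terms carrying $\grad\big((\partial_{X_k}\bar X_i)\circ f\big)$ are multiplied by $\xi f_k(0)=0$ and drop out at the origin, leaving $\grad(\xi\bar f_i)(0)=\sum_k b_{ik}\,\grad(\xi f_k)(0)$ and the analogue for $\eta$. Using $b_{i3}=b_{i4}=0$ for $i\le2$, only the block $B_{11}$ survives, so the $4\times4$ matrix $\big(\grad(\xi\bar f_1),\grad(\xi\bar f_2),\grad(\eta\bar f_1),\grad(\eta\bar f_2)\big)$ is obtained from the old one by right multiplication by a matrix of determinant $(\det B_{11})^2>0$. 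Therefore $\sign\det\big(\grad(\xi f_1),\grad(\xi f_2),\grad(\eta f_1),\grad(\eta f_2)\big)$, and with it $\ep_1$, is invariant.

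The delicate step, which I expect to be the main obstacle, is the trace part of $(2)$ in the elliptic case. There $\ep_1\ep_2=\sign\trace\hess_{(\xi,\eta)}\lambda(0)$ acquires the factor $\sign\mu(0)=\sign(\det B_{11}\det B_{22})=\sign\det B(0)$, so that $\ep_2$ gets multiplied by $\sign\det B(0)$ once $\ep_1$ is known to be invariant. The resolution is that the admissible target changes entering $\A$-isotopy are orientation preserving, whence $\det B(0)>0$, $\mu(0)>0$, and $\sign\trace\hess_{(\xi,\eta)}\lambda(0)$ together with $\ep_2$ is preserved; dropping orientation would reverse $\ep_2$, which merely reflects that $\Sigma^{2,0}_{{\rm elli},\ep_1,\ep_2}$ and $\Sigma^{2,0}_{{\rm elli},\ep_1,-\ep_2}$ are genuinely distinct $\A$-isotopy classes interchanged by an orientation reversing target change. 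Combining the three computations yields independence of all of $(1)$ and $(2)$ from the choice of admissible target coordinate system.
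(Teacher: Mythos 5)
Your proof is correct and takes essentially the same approach as the paper's: both derive the vanishing block $b_{13}=b_{14}=b_{23}=b_{24}=0$ of the transition Jacobian at $0$ from the normalization, deduce that the Hessian of $\lambda$ at its critical point merely rescales by $\mu(0)$ (so condition $(1)$ picks up the square $\mu(0)^2>0$), and show that the $4\times 4$ gradient matrix is multiplied on the right by a block-diagonal matrix of determinant $(\det B_{11})^2>0$. The only real difference is your explicit handling of the trace--orientation subtlety in the elliptic case: the paper buries this in the assertion that the Jacobian determinants of $f$ and $\Phi\circ f$ differ by a ``positive function,'' which---exactly as you observe---presupposes that the admissible target changes are orientation-preserving, the correct class for the $\A$-isotopy statement being proved.
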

\begin{proof}
Let $f(x)=(f_1,f_2,f_3,f_4)(x)\in C^\infty(4,4)$ $(x=(x_1,x_2,x_3,x_4))$
be a map-germ
such that
$\rank (df)(0)=2$,
$f_{x_1} =f_{x_2}=0$
and
$df_1=df_2=\zv$ holds at $0$.
Let
$\Phi=(\Phi_1,\Phi_2,\Phi_3,\Phi_4)$ be a diffeomorphism-germ
of $(\R^4,0)$ such that
$d(\Phi_1\circ f)_{\zv}=d(\Phi_2\circ f)_{\zv}=0$.
We show that the condition is the same for
both $f$ and $\Phi\circ f$.
Since
$
(\Phi\circ f)_{x_1}
=
(\Phi\circ f)_{x_2}
=
0
$
holds at $0$, $\ker df$ is the same for $f$ and $\Phi\circ f$.
Since the difference of the determinant of the Jacobi matrix 
is a positive function between $f$ and $\Phi\circ f$,
thus we see the independence of conditions for 
$\hess_{(\xi,\eta)}\lambda$.
Hence it is enough to show that if
$d\big((f_1)_{x_1}\big),d\big((f_1)_{x_2}\big),
d\big((f_2)_{x_1}\big),d\big((f_2)_{x_2}\big)$
is a positive frame, then
$d\big((\Phi_1\circ f)_{x_1}\big),d\big((\Phi_1\circ f)_{x_2}\big),
d\big((\Phi_2\circ f)_{x_1}\big),d\big((\Phi_2\circ f)_{x_2}\big)$
is a positive frame.

Firstly we detect the condition for
$d(\Phi_1\circ f)_{\zv}=d(\Phi_2\circ f)_{\zv}=0$.
By $d(f_1)(0)=d(f_2)(0)=0$, it holds that
$\big((f_i)_{x_1},(f_i)_{x_2},(f_i)_{x_3},(f_i)_{x_4}\big)(0)=\zv$ 
$(i=1,2)$.
Since
$$
d(\Phi_i\circ f)(0)
=\Big((\Phi_i\circ f)_{x_1},\ (\Phi_i\circ f)_{x_2},\ 
(\Phi_i\circ f)_{x_3},\ (\Phi_i\circ f)_{x_4}\Big)(0)
=\zv\qquad(i=1,2),
$$
we have
\begin{equation}\label{eq:sigma203}
%\begin{array}{l}
(\Phi_i)_{x_1}(f_1)_{x_j}+(\Phi_i)_{x_2}(f_2)_{x_j}
+
(\Phi_i)_{x_3}(f_3)_{x_j}+(\Phi_i)_{x_4}(f_4)_{x_j}
=0
%(\Phi_i)_{x_1}(f_1)_{x_4}+(\Phi_i)_{x_2}(f_2)_{x_4}
%+
%(\Phi_i)_{x_3}(f_3)_{x_4}+(\Phi_i)_{x_4}(f_4)_{x_4}
%=0\\
%\end{array}
\qquad(i=1,2,\ j=3,4)
\end{equation}
at $0$. Substituting 
$\big((f_i)_{x_1},(f_i)_{x_2},(f_i)_{x_3},(f_i)_{x_4}\big)(0)=\zv$ $(i=1,2)$
into \eqref{eq:sigma203},
we have
$$
(\Phi_i)_{x_3}(f_3)_{x_3}+(\Phi_i)_{x_4}(f_4)_{x_3}
=0\quad  \text{and}\quad 
(\Phi_i)_{x_3}(f_3)_{x_4}+(\Phi_i)_{x_4}(f_4)_{x_4}
=0
\quad(i=1,2)$$
at $0$. Since
$\rank (df)(0)=2$, it holds that
$\big((f_i)_{x_1},(f_i)_{x_2},(f_i)_{x_3},(f_i)_{x_4}\big)=\zv$ $(i=1,2)$
and
$f_{x_1} =f_{x_2}=0$
at $0$,
we see that 
$\big((f_3)_{x_3},(f_4)_{x_3}\big)(0)$ and 
$\big((f_3)_{x_4},(f_4)_{x_4}\big)(0)$
are linearly independent.
Thus
$
(\Phi_1)_{x_3}=(\Phi_1)_{x_4}=(\Phi_2)_{x_3}=(\Phi_2)_{x_4}=0
$
at $0$.
On the other hand,
$$
d(\Phi_i\circ f)_{x_j}
=
d\left(\sum_{l=1}^4(\Phi_i)_{x_l}(f_l)_{x_j}\right)
=
\sum_{l=1}^4
 (\Phi_i)_{x_l}d\big((f_l)_{x_j}\big)
=
\sum_{l=1}^2
 (\Phi_i)_{x_l}d\big((f_l)_{x_j}\big)
$$
%$$
%\begin{array}{rcl}
%d(\Phi_i\circ f)_{x_j}
%&=&
%d\big(
%(\Phi_i)_{x_1}(f_1)_{x_j}+(\Phi_i)_{x_2}(f_2)_{x_j}
%+
%(\Phi_i)_{x_3}(f_3)_{x_j}+(\Phi_i)_{x_4}(f_4)_{x_j}\big)\\
%&=&
%(\Phi_i)_{x_1}d\big((f_1)_{x_j}\big)+(\Phi_i)_{x_2}d\big((f_2)_{x_j}\big)
%+
%(\Phi_i)_{x_3}d\big((f_3)_{x_j}\big)+(\Phi_i)_{x_4}d\big((f_4)_{x_j}\big),\\
%&=&
%(\Phi_i)_{x_1}d\big((f_1)_{x_j}\big)+(\Phi_i)_{x_2}d\big((f_2)_{x_j}\big),\\
%%
%d(\Phi_i\circ f)_{x_2}
%&=&
%(\Phi_i)_{x_1}d((f_1)_{x_2})+(\Phi_i)_{x_2}d((f_2)_{x_2})
%+
%(\Phi_i)_{x_3}d((f_3)_{x_2})+(\Phi_i)_{x_4}d((f_4)_{x_2}),\\
%&=&
%(\Phi_i)_{x_1}d((f_1)_{x_2})+(\Phi_i)_{x_2}d((f_2)_{x_2}),\\
%\end{array}
%\ (i,j=1,2)
%$$
$(i,j=1,2)$ hold at $0$. Thus 
$$
\begin{array}{l}
\Big(
d\big((\Phi_1\circ f)_{x_1}\big),\ 
d\big((\Phi_2\circ f)_{x_1}\big),\ 
d\big((\Phi_1\circ f)_{x_2}\big),\ 
d\big((\Phi_2\circ f)_{x_2}\big)
\Big)\\
\hspace{10mm}
=
\Big(
d\big((f_1)_{x_1}\big),\ 
d\big((f_2)_{x_1}\big),\ 
d\big((f_1)_{x_2}\big),\ 
d\big((f_2)_{x_2}\big)\Big)
\pmt{
J\Phi&O\\
O&J\Phi},
\end{array}
$$
where 
$$
J\Phi=
\pmt{(\Phi_1)_{x_1}&(\Phi_2)_{x_1}\\
(\Phi_1)_{x_2}&(\Phi_2)_{x_2}},\quad
O=\pmt{0&0\\0&0}
$$
holds at $0$.
This shows the desired result.
\end{proof}
Next we study the relation between
the condition (1) and
the quotient ring $Q(f)$.
Let $\E_n$ be the local ring of function-germs
$(\R^m,0)\to(\R,*)$.
Let $f=(f_1,\ldots,f_4)\in C^\infty(4,4)$ and
$Q(f)$ denote the quotient ring
$\E_4/\spann{f_1,f_2,f_3,f_4}_{\E_4}$.
\begin{lemma}
Let\/
$f\in C^\infty(4,4)$ 
satisfies that\/
$\rank df_{\zv}=2$.
Then\/
$Q(f)=\E_2/\spann{x_1^2,x_2^2}_{\E_2}$
$($respectively, 
$Q(f)=\E_2/\spann{x_1^2-x_2^2,x_1x_2}_{\E_2})$
is equivalent to that\/
$\det\hess_{(\xi,\eta)}\lambda(0)<0$
$($respecti\-vely, $\det\hess_{(\xi,\eta)}\lambda(0)>0)$ holds,
where\/ $(\xi,\eta)$ is a pair of vector fields
on the source such that\/ $\xi,\eta$ are
the basis of\/ $\ker df$ at\/ $0$.
\end{lemma}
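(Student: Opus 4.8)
The plan is to reduce $f$ to the normal form supplied by the preceding lemmas, compute $\hess_{(\xi,\eta)}\lambda$ explicitly from the second-order data of $f$, and then read off the algebra $Q(f)$ from the pencil of quadratic forms determined by that same data. First, since $\rank(df)(0)=2$, I would choose a target coordinate system with $d(X_1\circ f)(0)=d(X_2\circ f)(0)=0$; by Lemma \ref{lem:sigma202} this affects neither the conditions of Theorem \ref{thm:sigma20cris} nor the isomorphism class of $Q(f)$ (the latter being an $\A$-invariant, since composing with a source or target diffeomorphism-germ carries $f^*\mathfrak m$ to an isomorphic ideal), while $d(X_3\circ f)(0),d(X_4\circ f)(0)$ remain independent. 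A source diffeomorphism-germ turns $X_3\circ f, X_4\circ f$ into the coordinates $x_3,x_4$, and since $df_1(0)=df_2(0)=0$ is preserved, we arrive at $f=(f_1,f_2,x_3,x_4)$ with $\ker(df)(0)=\spann{\partial_{x_1},\partial_{x_2}}$. Then $\langle f_3,f_4\rangle=\langle x_3,x_4\rangle$, so
$$Q(f)=\E_4/\langle f_1,f_2,x_3,x_4\rangle_{\E_4}\cong \E_2/\langle \bar f_1,\bar f_2\rangle_{\E_2},\qquad \bar f_i(x_1,x_2)=f_i(x_1,x_2,0,0),$$
and by Lemma \ref{lem:sigma201} I may compute with $\xi=\partial_{x_1}$, $\eta=\partial_{x_2}$.

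Next I would carry out the Hessian computation. Here $\lambda=\det(df)=(f_1)_{x_1}(f_2)_{x_2}-(f_1)_{x_2}(f_2)_{x_1}$ is the Jacobian of $(f_1,f_2)$ in $(x_1,x_2)$, and since all first derivatives of $f_1,f_2$ vanish at $0$, $\lambda$ vanishes there to second order. Writing $A_1=\hess_{(\xi,\eta)}\bar f_1(0)=\pmt{P_{11}&P_{12}\\P_{12}&P_{22}}$ and $A_2=\hess_{(\xi,\eta)}\bar f_2(0)=\pmt{Q_{11}&Q_{12}\\Q_{12}&Q_{22}}$, a direct differentiation gives
$$\hess_{(\xi,\eta)}\lambda(0)=\pmt{2(P_{11}Q_{12}-P_{12}Q_{11})&P_{11}Q_{22}-P_{22}Q_{11}\\P_{11}Q_{22}-P_{22}Q_{11}&2(P_{12}Q_{22}-P_{22}Q_{12})}.$$
The key identity, which I would verify by expanding both sides, is
$$\det\hess_{(\xi,\eta)}\lambda(0)=-\Delta,\qquad \Delta=\text{discriminant of the binary form }\det(sA_1+tA_2)\text{ in }(s,t).$$
Hence $\det\hess_{(\xi,\eta)}\lambda(0)<0$ (respectively $>0$) exactly when $\Delta>0$ (respectively $\Delta<0$).

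I would then translate the sign of $\Delta$ into the isomorphism type of the pencil $\langle q_1,q_2\rangle$ of quadratic parts $q_i$ of $\bar f_i$. When $\Delta>0$ the form $\det(sA_1+tA_2)$ has two distinct real roots, and at each root $s_iA_1+t_iA_2$ is a nonzero singular symmetric matrix, hence of rank one, so $s_iq_1+t_iq_2=\pm\ell_i^2$ for independent linear forms $\ell_1,\ell_2$; a linear change of coordinates then makes $\langle q_1,q_2\rangle=\langle x_1^2,x_2^2\rangle$ (hyperbolic). When $\Delta<0$ every nonzero member of the pencil is nondegenerate and none is a square, and the standard normal form for such a pencil of binary quadratics is $\langle x_1^2-x_2^2,x_1x_2\rangle$ (elliptic). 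These match the two algebras in the statement, and the signs line up with the computation above, as one checks directly on the two umbilic normal forms ($\bar f=(x_1^2,x_2^2)$ gives $\det\hess=-16<0$, while $\bar f=(x_1^2-x_2^2,x_1x_2)$ gives $\det\hess=16>0$).

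Finally I must pass from the quadratic parts back to $Q(f)=\E_2/\langle\bar f_1,\bar f_2\rangle$. When $\Delta\ne0$ the four cubics $x_1q_1,x_2q_1,x_1q_2,x_2q_2$ span all of $\mathfrak m^3$, so $\mathfrak m^3\subset\langle q_1,q_2\rangle$ and the $2$-jet $(q_1,q_2)$ is $\mathcal K$-determined; consequently $(\bar f_1,\bar f_2)$ is contact-equivalent to $(q_1,q_2)$ and $Q(f)\cong\E_2/\langle q_1,q_2\rangle$. This is the step I expect to be the main obstacle: the contact-determinacy argument, together with the pencil normal forms, is what guarantees that the higher-order terms of $\bar f_1,\bar f_2$ do not alter the algebra. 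For the converse direction one observes that if $Q(f)$ equals one of the two listed algebras then its Hilbert function $(1,2,1)$ forces the degree-two part of $\langle\bar f_1,\bar f_2\rangle$ to be exactly the corresponding pencil $\langle q_1,q_2\rangle$, so the sign of $\Delta=-\det\hess_{(\xi,\eta)}\lambda(0)$ is determined; the degenerate case $\Delta=0$ is excluded on both sides. The remaining ingredient, the identity $\det\hess_{(\xi,\eta)}\lambda(0)=-\Delta$, is then a direct but bookkeeping-heavy polynomial computation.
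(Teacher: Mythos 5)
Your proposal is correct, and although it shares the paper's skeleton, the machinery is genuinely different. Both arguments begin the same way: reduce to $f=(f_1,f_2,x_3,x_4)$ with $df_1(0)=df_2(0)=0$, $\xi=\partial_{x_1}$, $\eta=\partial_{x_2}$, so that $Q(f)=\E_2/\spann{\bar f_1,\bar f_2}_{\E_2}$ and everything is governed by the quadratic parts $q_1,q_2$. From there the paper proceeds by explicit coefficient normalization: it writes $q_i=a_ix_1^2+2b_ix_1x_2+c_ix_2^2$, expands $\det\hess\lambda/16$ as a polynomial in the six coefficients, arranges $b_1=a_2=0$ by linear changes, splits into the cases $b_2\ne0$ and $b_2=0$, reads off generators $\tilde a_1\tilde x_1^2+\tilde c_1\tilde x_2^2,\ \tilde x_1\tilde x_2$ (up to $O(3)$) using $\det\hess\lambda=-64\tilde a_1\tilde c_1$, and then only asserts that the $O(3)$ terms can be removed (``one can show that this is isomorphic\dots''), apart from the remark $x_1^3,x_2^3\in\spann{x_1^2-x_2^2,x_1x_2}_{\E_2}$. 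You instead package the governing quantity invariantly: your identity $\det\hess_{(\xi,\eta)}\lambda(0)=-\Delta$, with $\Delta$ the discriminant of $\det(sA_1+tA_2)$, is in fact exactly the paper's displayed polynomial in disguise (both your Hessian formula and the identity check out by direct expansion, and your sanity checks on the two umbilic models give the correct signs); you then invoke the classification of nondegenerate pencils of binary quadratics by $\sign\Delta$, and --- this is the real gain --- you treat the higher-order terms honestly: $\mathfrak{m}^3\subset\mathfrak{m}\spann{q_1,q_2}_{\E_2}+\mathfrak{m}^4$ plus Nakayama gives $\mathfrak{m}^3\subset\spann{q_1,q_2}_{\E_2}$ and hence the equality of ideals $\spann{\bar f_1,\bar f_2}_{\E_2}=\spann{q_1,q_2}_{\E_2}$. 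So your route buys manifest invariance (no case analysis; compatibility with Lemmas \ref{lem:sigma201} and \ref{lem:sigma202} is automatic) and closes the determinacy gap the paper leaves implicit; the paper's route buys self-containedness, needing no pencil classification and no determinacy theory --- indeed your appeal to $\mathcal{K}$-determinacy is dispensable, since the bare Nakayama argument already yields equality of the ideals.

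One place where you claim more than your tools deliver is the converse direction. The Hilbert function $(1,2,1)$ of $Q(f)$ does \emph{not} force the degree-two part of the ideal to be ``the corresponding pencil''; it only forces $q_1,q_2$ to be linearly independent and $\mathfrak{m}^3\subset\spann{\bar f_1,\bar f_2}_{\E_2}$, hence $Q(f)\cong\E_2/\big(\spann{q_1,q_2}_{\E_2}+\mathfrak{m}^3\big)$. All three pencil types --- hyperbolic, elliptic, and the tangent case $\spann{x_1^2,x_1x_2}$ with $\Delta=0$ --- have Hilbert function $(1,2,1)$, so to pin down $\sign\Delta$ you still need that the three resulting algebras are pairwise non-isomorphic. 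This is easy (for instance, the kernel of the squaring map $\mathfrak{m}_Q/\mathfrak{m}_Q^2\to\mathfrak{m}_Q^2/\mathfrak{m}_Q^3$ is two lines for $\E_2/\spann{x_1^2,x_2^2}_{\E_2}$, zero for $\E_2/\spann{x_1^2-x_2^2,x_1x_2}_{\E_2}$, and one line for $\E_2/\spann{x_1^2,x_1x_2,x_2^3}_{\E_2}$), and the paper's own treatment of this direction (``the conclusion is obvious by considering a coordinate change'') is no more detailed; but as written your sentence is not literally correct, so this step should be added.
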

\begin{proof}
Since the condition and the conclusion 
do not depend on the choice of coordinate systems
and the choice of vector fields,
we may assume that
\begin{equation}\label{eq:normalized}
f(x_1,x_2,x_3,x_4)
=(f_1(x_1,x_2,x_3,x_4),f_2(x_1,x_2,x_3,x_4),x_3,x_4),
\end{equation}
$(d f_1)_{\zv}=(d f_2)_{\zv}=\zv$,
$\xi=\partial x_1$ and $\eta=\partial x_2$.
Then
$$
Q(f)=\E_{2}
/
\spann{f_1(x_1,x_2,0,0),f_2(x_1,x_2,0,0)}_{\E_2}.
$$
Let us assume $Q(f)=\E_2/\spann{x_1^2,x_2^2}_{\E_2}$,
and let
$i$ be an isomorphism $i:Q(f)\to\E_2/\spann{x_1^2,x_2^2}_{\E_2}$.
Then the conclusion is obvious by
considering a coordinate change
$i(x_1),i(x_2)$ to $X_1,X_2$ on the 
target.
We show the converse.
We set the second order terms of
$f_1(x_1,x_2,0,0),f_2(x_1,x_2,0,0)$
as
\begin{equation}\label{eq:qf1}
a_1x_1^2+2b_1x_1x_2+c_1x_2^2,\ 
a_2x_1^2+2b_2x_1x_2+c_2x_2^2.
\end{equation}
Then we have
$$
\det\hess\lambda/16=
4 a_2 b_1 b_2 c_1 
- 4 a_1 b_2^2 c_1 - a_2^2 c_1^2 - 4 a_2 b_1^2 c_2 
+ 4 a_1 b_1 b_2 c_2 + 2 a_1 a_2 c_1 c_2 
- a_1^2 c_2^2.
$$
By a suitable coordinate change,
we may assume 
$b_1=0$ and $a_2=0$
in \eqref{eq:qf1}.
\mycomment{
By a coordinate change of $x_1$,
we may assume that
$$
\bar a_1x_1^2+\bar c_1x_2^2,\ 
\bar a_2x_1^2+2\bar b_2x_1x_2+\bar c_2x_2^2.
$$
By a coordinate change,
we may assume that 
$$
\bar a_1x_1^2+\bar c_1x_2^2,\ 
2\bar{\bar b}_2x_1x_2+\bar{\bar c}_2x_2^2.
$$
We rewrite $\bar a$ to $a$.
Then we have
$$
\bar a_1x_1^2+\bar c_1x_2^2,\ 
(2b_2x_1+c_2x_2)x_2.
$$
}
If $b_2\ne0$, then by the coordinate change
$\tilde x_1=2b_2x_1+c_2x_2, \tilde x_2=x_2$,
it holds that
$$
\begin{array}{rcl}
\displaystyle
&&\spann{f_1(x_1,x_2,0,0),f_2(x_1,x_2,0,0)}_{\E_2}\\
&=&
\spann{
\tilde a_1\tilde x_1^2+2\tilde b_1\tilde x_1\tilde x_2
+\tilde c_1\tilde x_2^2+O(3),\ 
\tilde x_1\tilde x_2+O(3)}_{\E_2}\\
&=&
\displaystyle
\spann{
\tilde a_1\tilde x_1^2
+\tilde c_1\tilde x_2^2+O(3),\ 
\tilde x_1\tilde x_2+O(3)}_{\E_2}
\end{array}
$$
for some $\tilde a_1(\ne0),\tilde b_1,\tilde c_1\in\R$,
where $O(3)$ means the terms which consist of terms
whose degrees are higher than 3.
In the case $\det\hess\lambda<0$,
since
$\det\hess\lambda=-64\tilde a_1\tilde c_1$,
we have $\tilde a_1\tilde c_1>0$.
Thus it holds that
$$
\spann{
\tilde a_1\tilde x_1^2
+\tilde c_1\tilde x_2^2,\ 
\tilde x_1\tilde x_2}_{\E_2}
=
\spann{
\tilde x_1^2
+\tilde x_2^2,\ 
\tilde x_1\tilde x_2}_{\E_2}
=
\spann{
\tilde x_1^2,\tilde x_2^2}_{\E_2},
$$
where we omit $O(3)$.
If $b_2=0$, since $a_1c_2\ne0$, it holds that
$$
\displaystyle
\spann{f_1(x_1,x_2,0,0),f_2(x_1,x_2,0,0)}_{\E_2}
=
\spann{
x_1^2+O(3),\ x_2^2+O(3)}_{\E_2}.
$$
Thus in both cases
$Q(f)=\E_2/\spann{x_1^2+O(3),x_2^2+O(3)}_{\E_2}$ holds.
One can show that this is isomorphic to
$\E_2/\spann{x_1^2,x_2^2}_{\E_2}$.

In the case, $\det\hess\lambda>0$,
we have $\tilde a_1\tilde c_1<0$.
Thus
$$
\displaystyle
\spann{f_1(x_1,x_2,0,0),f_2(x_1,x_2,0,0)}_{\E_2}
=
\spann{
x_1^2-x_2^2+O(3),\ x_1x_2+O(3)}_{\E_2}
$$
holds.
If $b_2=0$, then 
it holds that
$
\det\hess\lambda=-16a_1^2c^2_2.
$
This means that this case does not occur.
Hence
$Q(f)=\E_2/\spann{x_1^2-x_2^2+O(3),x_1x_2+O(3)}_{\E_2}$ holds.
Since $x_1^3,x_2^3\in \spann{x_1^2-x_2^2,x_1x_2}_{\E_2}$ holds,
this is isomorphic to
$\E_2/\spann{x_1^2-x_2^2,x_1x_2}_{\E_2}$.
\end{proof}

The 1-jet extension 
%$j^1f$ of the map-germ $f$ of the form \eqref{eq:normalized}
%is transverse to the set
$j^1f$\hfill of\hfill the\hfill map-germ\hfill $f$\hfill of\hfill the\hfill 
form\hfill \eqref{eq:normalized}\hfill is\hfill transverse\hfill 
to\hfill the\hfill set\\
 $\Sigma^2=\{j^1f\,|\,\rank (df)(0)=2\}$
at $0$
if and only if
$$
\det\Big(
d\big((f_1)_{x_1}\big),\ 
d\big((f_2)_{x_1}\big),\ 
d\big((f_1)_{x_2}\big),\ 
d\big((f_2)_{x_2}\big)
\Big)(0)\ne0,
$$
since 
$\Sigma^2=\big\{j^1f\,|\,
d\big((f_1)_{x_1}\big)=
d\big((f_2)_{x_1}\big)=
d\big((f_1)_{x_2}\big)=
d\big((f_2)_{x_2}\big)=0\big\}$.
Summarizing the above arguments,
and
by following the same arguments as in \cite[p183--186]{GG},
and taking care to use orientation-preserving 
diffeomorphism-germs,
one can complete the proof of Theorem \ref{thm:sigma20cris}.

Theorem \ref{thm:sigma20cris}
shows that 
there are two $\A$-isotopy classes in the
$\A$-class of $\Sigma^{2,0}_{\rm hyp}$,
and
there are four $\A$-isotopy classes in that 
of $\Sigma^{2,0}_{\rm elli}$.
The invariant $\ep_2$ is equal to
the mapping degree.
See \cite[Corollary 5.13]{df} for its global meaning.
See also \cite[Theorem 2.5, Remark 2.6]{et},
\cite[page 398]{p}, \cite[Theorem I]{s}.
As a corollary, we get an
$\A$-criteria 
for $\Sigma^{2,0}$-singularities:
\begin{corollary}
Let\/ $f\in C^\infty(4,4)$ be a map-germ such that\/
$\rank (df)(0)=2$ holds.
Then\/ $f$ is\/ $\A$-equivalent to\/ $\Sigma^{2,0}_{{\rm hyp}}$
$($respectively, $\Sigma^{2,0}_{{\rm elli}})$
if and only if
for a coordinate system\/ $(X_1,X_2,X_3,X_4)$ on the target
satisfying that\/
$d(X_1\circ f)(0)=d(X_2\circ f)(0)={0}$
and a pair of vector fields\/ $(\xi,\eta)$ on the source satisfying that\/
$\spann{\xi(0),\eta(0)}=\ker (df)(0)$,
it holds that
\begin{itemize}
\item[$(1)$] $\det\hess_{(\xi,\eta)}\lambda(\zv)<0$ 
$($respectively, $\det\hess_{(\xi,\eta)}\lambda(\zv)>0)$, and
\item[$(2)$] $\det(d(\xi g_1),d(\xi g_2),d(\eta g_1),d(\eta g_2))\ne0$.
\end{itemize}
\end{corollary}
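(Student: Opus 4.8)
The plan is to deduce this corollary directly from Theorem~\ref{thm:sigma20cris}, passing from the fine relation of $\A$-isotopy to the coarser relation of $\A$-equivalence by forgetting the sign data $\ep_1,\ep_2$. The first step I would record is the elementary but essential observation that $f$ is $\A$-equivalent to $\Sigma^{2,0}_{\rm hyp}$ (respectively $\Sigma^{2,0}_{\rm elli}$) precisely when $f$ is $\A$-isotopic to $\Sigma^{2,0}_{{\rm hyp},\ep_1}$ for some $\ep_1\in\{\pm1\}$ (respectively to $\Sigma^{2,0}_{{\rm elli},\ep_1,\ep_2}$ for some $\ep_1,\ep_2\in\{\pm1\}$). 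This is exactly the content of the remark following Theorem~\ref{thm:sigma20cris}: the signed umbilics all lie in the single $\A$-orbit of the corresponding standard umbilic, the signs being removable by orientation-reversing coordinate changes on source and target, which are permitted under $\A$ but not under $\A$-isotopy. Hence the signed umbilics are precisely the $\A$-isotopy representatives inside the given $\A$-class.

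With this reduction in hand, the corollary becomes a matter of taking the logical disjunction of the criterion of Theorem~\ref{thm:sigma20cris} over the admissible signs. In the hyperbolic case the theorem characterizes $\A$-isotopy to $\Sigma^{2,0}_{{\rm hyp},\ep_1}$ by $\det\hess_{(\xi,\eta)}\lambda(\zv)<0$ together with $\sign\det\big(\grad(\xi f_1),\grad(\xi f_2),\grad(\eta f_1),\grad(\eta f_2)\big)=-\ep_1$. Since for any nonzero value of this $4\times4$ determinant there is exactly one $\ep_1\in\{\pm1\}$ solving the sign equation (namely $\ep_1=-\sign\det(\cdots)$), disjoining over $\ep_1$ replaces the sign condition by the single requirement that the determinant be nonzero. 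This yields precisely conditions $(1)$ and $(2)$ of the corollary.

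The elliptic case is handled the same way, except that the theorem's criterion carries the additional trace condition $\sign\big(\trace\hess_{(\xi,\eta)}\lambda(\zv)\big)=\ep_1\ep_2$ alongside $\det\hess_{(\xi,\eta)}\lambda(\zv)>0$ and the sign equation for the determinant. A priori, disjoining over $\ep_1,\ep_2$ would leave two nondegeneracy requirements: that both the $4\times4$ determinant and the trace be nonzero. The point I would stress is that the trace requirement is automatic. Since $\hess_{(\xi,\eta)}\lambda(\zv)$ is symmetric and has positive determinant, its two eigenvalues are nonzero of the same sign, so $\trace\hess_{(\xi,\eta)}\lambda(\zv)\ne0$ and $\sign\,\trace$ is already determined; thus a valid $\ep_2$ is always available and the trace condition drops out, leaving only $\det(\cdots)\ne0$ as condition $(2)$.

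The reduction over signs and the disjunction are routine once Theorem~\ref{thm:sigma20cris} is granted. The step I regard as the crux, and would present carefully, is the observation that $\det\hess_{(\xi,\eta)}\lambda(\zv)>0$ forces $\trace\hess_{(\xi,\eta)}\lambda(\zv)\ne0$: this is exactly what explains why the corollary's condition $(2)$ is a bare nondegeneracy condition, with no trace term, in contrast to the finer Theorem~\ref{thm:sigma20cris}. I would also note at the outset that conditions $(1)$ and $(2)$ are independent of the choice of target coordinates and of the frame $(\xi,\eta)$, but this is already furnished by Lemmas~\ref{lem:sigma201} and~\ref{lem:sigma202}, so no further verification is needed.
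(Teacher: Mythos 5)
Your proposal is correct and follows exactly the route the paper intends: the paper states this corollary without a separate proof, as an immediate consequence of Theorem~\ref{thm:sigma20cris} together with the remark that the signed umbilics exhaust the $\A$-isotopy classes inside each $\A$-class, and your argument---disjoining the theorem's criteria over $\ep_1,\ep_2$ so that the sign equations collapse to the single nondegeneracy condition $(2)$---is precisely that deduction made explicit. Your observation that $\det\hess_{(\xi,\eta)}\lambda(\zv)>0$ forces $\trace\hess_{(\xi,\eta)}\lambda(\zv)\ne0$, so that the trace condition of the theorem drops out in the elliptic case, correctly supplies the one detail the paper leaves tacit.
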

\bigskip

{\bf Acknowledgment}
The author thanks professor Toru Ohmoto for fruitful discussion,
he suggested the application in Section \ref{sec:perturb}.
He also thanks to professor Stanis{\l}aw Janeczko
for constant encouragement.
He was
partly supported by Grant-in-Aid for Scientific Research
(C) No.\ 26400087,
from the Japan Society for the Promotion of Science.

%\section{thebibliography}

\begin{flushright}
\begin{tabular}{c}
Department of Mathematics,\\
Graduate School of Science, \\
Kobe University, \\
Rokko, Nada, Kobe 657-8501, Japan\\
{\tt sajiO$\!\!\!$amath.kobe-u.ac.jp}
\end{tabular}
\end{flushright}

\end{document}